\newtheorem{lemma}{Lemma}
\newtheorem{theorem}{Theorem}
\numberwithin{equation}{section}
\journal{Springer}
\begin{document}
\begin{frontmatter}
\title{On $q$-analogue of modified Kantorovich-type discrete-Beta operators}
\author[label1,label*]{Preeti Sharma}
\ead{preeti.iitan@gmail.com}
\author[label1,label2]{Vishnu Narayan Mishra}
\ead{vishnunarayanmishra@gmail.com,
vishnu\_narayanmishra@yahoo.co.in}
\address[label1]{Department of Applied Mathematics \& Humanities,
Sardar Vallabhbhai National Institute of Technology, Ichchhanath Mahadev Dumas Road, Surat -395 007 (Gujarat), India}
\address[label2]{L. 1627 Awadh Puri Colony Beniganj, Phase -III, Opposite - Industrial Training Institute, Ayodhya Main Road, Faizabad-224 001, (Uttar Pradesh), India }
\fntext[label*]{Corresponding author}

\begin{abstract}
The present paper deals with the Stancu type generalization of the Kantorovich discrete $q$-Beta operators.  We establish some direct results, which include the asymptotic formula and error estimation in terms of the modulus of continuity and weighted approximation. 
\end{abstract}

\begin{keyword}
Kantorovich type $q$-Beta operators, $q$-integer, asymptotic formula, rate of convergence, modulus of continuity, Stancu operator.\\
$2000$ Mathematics Subject Classification: Primary $41A25$, $41A35$, $41A36$.
\end{keyword}
\end{frontmatter}

\section{Introduction}
In the last decade, some new generalizations of well known positive linear operators based on $q$-integers were introduced and studied by several authors. Stancu type generalization of positive linear operators studied by several authors \cite{CI,PS1,Rasa2009gonska,Acaramc} and references therein. Our aim is to investigate some approximation properties of a Kantorovich-Stancu type $q$-Beta operators. Discrete Beta operators based on $q$-integers was introduced by Gupta $et~ al.$ in \cite{VG} and they established some approximation results. They also obtained some global direct error estimates for the operators (\ref{eq1}) using the second-order Ditzian-Totik modulus of smoothness and defined and
studied the limit discrete $q$-Beta operator.\\
\indent Gupta $et~ al.$ \cite{VG} introduced discrete $q$-Beta operators as follows:
\begin{equation}\label{eq1}
V_{n,q}(f(t);x)=V_{n}(f(t);q;x)= \frac{1}{[n]_q}\sum_{k=0}^{\infty} p_{n,k}(q;x)f\bigg(\frac{[k]_q}{[n+1]_q q^{k-1}}\bigg),
\end{equation}
where

\begin{equation}\label{eq2}
 p_{n,k}(q;x)= \frac{q^{k(k-1)/2}}{B_q(k+1,n)} \frac{x^k}{(1+x)_q^{n+k+1}}.
\end{equation}
Also, they gave the following equalities:\\
$V_{n}(1;q;x)=1,~~~~~~ V_{n}(t;q;x)=x ~~~~~\text {for every } n\in \mathbb{N}$ and\\
$V_{n}(t^2;q;x)=\bigg(\frac{1}{q[n+1]_q}+1\bigg)x^2+ \frac{x}{[n+1]_q}.$
\newline
In the recent years, applications of $q$-calculus in approximation theory is one of the interesting
areas of research. Several authors have proposed the $q$-analogues of Kantorovich type modification of different linear positive operators and studied their approximation behaviors.\\
In 2013 Mishra $et~al.$ \cite{KJ1} introduced Kantorovich-type modification of discrete $q$-Beta operators
for each positive integer $n$, $q\in (0,1)$ as follows:
\begin{equation}\label{eq3}
V^{*}_{n,q}(f(t);q;x)= \frac{[n+1]_q}{[n]_q}\sum_{k=0}^{\infty} \bigg(\int_{\frac{[k]_q}{[n+1]_q}}^{\frac{[k+1]_q}{[n+1]_q}}f(t)d_qt\bigg) \frac{p_{n,k}(q;x)}{ q^{2k-1}},
\end{equation}
where $f$ is a continuous and non-decreasing function on the interval $[0,\infty), x \in [0,\infty).$\\
\indent It is seen that the operators $V^{*}_{n}$ are linear from the definition of $q$-integral, and since $f$
is a non-decreasing function, $q$-integral is positive, so $V^{*}_{n}$ are positive.\\
\indent The aim of this paper is to present a Kantorovich-Stancu type generalization of the operators given by (\ref{eq3}) and to give some approximation properties.\\
Kantorovich-Stancu type generalization of the operators (\ref{eq3}) is define as follows:

\begin{equation}\label{eq4}
\mathcal{L}^{(\alpha,\beta)}_{n,q}(f(t);q;x)= \frac{[n+\beta+1]_q}{[n+ \beta]_q}\sum_{k=0}^{\infty} \bigg(\int_{\frac{[k]_q}{[n+1]_q}}^{\frac{[k+1]_q}{[n+1]_q}}f\bigg(\frac{[n]_q t+\alpha}{[n]_q+\beta}\bigg)\bigg) d_qt \frac{p_{n,k}(q;x)}{ q^{2k-1}},
\end{equation}
where $p_{n,k}(q;x)$ is define as in (\ref{eq2}).

\section{Preliminaries}
To make the article self-content, here we mention certain basic definitions of $q$-calculus,
details can be found in \cite{ET, KC} and the other recent articles. For each nonnegative
integer $n$, the $q$-integer $[n]_q$ and the $q$-factorial $[n]_q!$ are, respectively, defined by
\begin{equation*} \displaystyle [n]_q = \left\{ \begin{array}{ll} \frac{1-q^n}{1-q}, & \hbox{$q\neq1$}, \\
n,& \hbox{$q=1$}, \end{array} \right.
 \end{equation*}
and

\begin{eqnarray*}
[n]_q!=\left\{
\begin{array}{ll}
[n]_q[n-1]_q[n-2]_q...[1]_q,  & \hbox{$n=1,2,...$},\\
1,& \hbox{$n=0$.}
\end{array}\right. 
\end{eqnarray*}
Then for $q >0$ and integers $n, k, k \geq n \geq 0$, we have\\
$$[n+1]_{q}=1+q[n]_q  ~~~~{ \text {  and }}~~~ [n]_q+q^n[k-n]_q=[k]_q.$$
We observe that

\begin{equation*} (1+x)_q^n=(-x;q)_n= \left\{ \begin{array}{ll} (1+x)(1+qx)(1+q^2x)...(1+q^{n-1}x), & \hbox{$n=1,2,...$},\\
1,& \hbox{$n=0.$} \end{array}\right.\end{equation*} 
Also, for any real number $\alpha$, we have

$$(1+x)_q^\alpha =\frac{(1+x)_q^\infty}{(1+q^\alpha x)_q^\infty}.$$ 
In special case, when $\alpha$ is a whole number, this definition coincides with the above definition.\\
The $q$-Jackson integral and $q$-improper integral defined as

\begin{equation*}\int_0^a f(x) d_qx=(1-q)a\sum_{n=0}^\infty f(aq^n)q^n\end{equation*}
and
\begin{equation*}\int_0^{\infty/A} f(x) d_qx=(1-q)a\sum_{n=0}^\infty f\left(\frac{q^n}{A}\right)\frac{q^n}{A},\end{equation*}
provided sum converges absolutely.\\

\section{Basic results}
\begin{lemma}\label{L1}
\cite{KJ1} The following hold:\\
\begin{itemize}
\item[(i)] $V^{*}_{n,q}(1;q;x)=1,$
\item[(ii)] $V^{*}_{n,q}(t;q;x)=x+\frac{q}{[2]_q[n+1]_q},$
\item[(iii)] $V^{*}_{n,q}(t^2;q;x)=\bigg(\frac{q^{n-2}[n+2]_q}{[n+1]_q}\bigg)x^2+\bigg(\frac{q^{n-1}}{[n+1]_q}+\frac{2q+1}{[n+1]_q [3]_q}\bigg) x+ \frac{q}{[n+1]_q^2[3]_q}.$
\end{itemize}\end{lemma}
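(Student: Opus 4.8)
The plan is to unwind $V^{*}_{n,q}$ in two steps: first evaluate the inner $q$-Jackson integrals in closed form, then reduce the remaining $k$-sums to the moment data for $V_{n,q}$ recorded in the Introduction. Using $\int_{0}^{a}t^{j}\,d_qt=a^{j+1}/[j+1]_q$ one has
$$\int_{\frac{[k]_q}{[n+1]_q}}^{\frac{[k+1]_q}{[n+1]_q}}t^{j}\,d_qt=\frac{[k+1]_q^{j+1}-[k]_q^{j+1}}{[j+1]_q\,[n+1]_q^{j+1}},\qquad j=0,1,2,$$
and the numerators collapse via the elementary identities already quoted: $[k+1]_q-[k]_q=q^{k}$ gives $q^{k}$ for $j=0$; $[k+1]_q=1+q[k]_q$ gives $q^{k}\big(1+[2]_q[k]_q\big)$ for $j=1$; and for $j=2$, factoring $[k+1]_q^{3}-[k]_q^{3}$ and expanding gives $q^{k}\big(1+(2q+1)[k]_q+[3]_q[k]_q^{2}\big)$.

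Substituting back into (\ref{eq3}) and pulling the $k$-free factors out of the sum expresses $V^{*}_{n,q}(t^{j};q;x)$ as a fixed $\mathbb{Q}(q)$-linear combination of the power-sums $\displaystyle S_{i}:=\sum_{k\ge 0}p_{n,k}(q;x)\,q^{-k}[k]_q^{i}$ for $i\le j$. Thus everything reduces to $S_{0},S_{1},S_{2}$. The identity $V_{n,q}(t;q;x)=x$, i.e. $\frac{1}{[n]_q}\sum_{k}p_{n,k}(q;x)[k]_q/([n+1]_qq^{k-1})=x$, gives $S_{1}=[n]_q[n+1]_qx/q$ immediately. For $S_{0}$ and $S_{2}$ the net $q$-exponent produced by the Kantorovich integration is $q^{-k}$, whereas $V_{n,q}(1;q;x)=1$ carries none and $V_{n,q}(t^{2};q;x)$ carries $q^{-2k}$; since these sums differ by a $k$-dependent power of $q$, they cannot be read off directly, and I would instead evaluate $S_{0}$ and $S_{2}$ from the closed form $p_{n,k}(q;x)=[n]_q\binom{n+k}{k}_q q^{k(k-1)/2}\,x^{k}/(1+x)_q^{n+k+1}$ (which comes from $B_q(k+1,n)=\Gamma_q(k+1)\Gamma_q(n)/\Gamma_q(n+k+1)$) using the $q$-binomial theorem and Pascal-type contiguous relations, or else simply quote the moment computation of \cite{KJ1}.

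With $S_{0},S_{1},S_{2}$ in hand, parts (i) and (ii) come out after a short cancellation using $[2]_q=1+q$ and $[n+1]_q=1+q[n]_q$, while (iii) follows by collecting the coefficients of $x^{2}$, $x$ and $1$ and collapsing with $[n+2]_q=1+q[n+1]_q$ and $[3]_q=1+q+q^{2}$ to reach $\big(q^{n-2}[n+2]_q/[n+1]_q\big)x^{2}+\big(q^{n-1}/[n+1]_q+(2q+1)/([n+1]_q[3]_q)\big)x+q/([n+1]_q^{2}[3]_q)$. I expect part (iii) to be the main obstacle --- not conceptually, but because three separate sources of $q$-powers (the $q^{k(k-1)/2}$ built into $p_{n,k}$, the factor $q^{k}$ contributed by the inner $q$-integral, and the $q$-power in the prefactor of (\ref{eq3})) must be reconciled, and a stray power of $q$ is easy to lose. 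I would keep the exponent of $q$ symbolic throughout and substitute the explicit $S_{i}$ only at the end.
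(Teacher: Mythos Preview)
The paper gives no proof of this lemma at all: it is stated with a bare citation to \cite{KJ1} and used as input for the subsequent results. So there is no ``paper's own proof'' to compare against; what you have written is already more than the paper provides.

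Your sketch is the natural way to verify (i)--(iii), and the identities you record for $[k+1]_q^{j+1}-[k]_q^{j+1}$ ($j=0,1,2$) are correct. Your observation that only $S_1$ can be read off directly from the moments of $V_{n,q}$, while $S_0$ and $S_2$ carry a different $k$-dependent power of $q$ and therefore require a separate computation (via the $q$-binomial theorem on the explicit $p_{n,k}$, or by citing \cite{KJ1}), is also accurate. In short: your plan is sound, honestly flags where the real bookkeeping sits, and goes beyond what the present paper does, since here the lemma is simply quoted.
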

Now we give an auxiliary lemma for the Korovkin test functions.

\begin{lemma}\label{L2}
Let $e_m(t) =t^m$, $m = 0,1,2.$ we have
\begin{itemize}
\item[(i)] $\mathcal{L}^{(\alpha,\beta)}_{n}(1;q;x)=1,$
\item[(ii)] $\mathcal{L}^{(\alpha,\beta)}_{n}(t;q;x)=\frac{[n]_q x}{[n]_{q}+\beta} +\frac{q[n]_q+\alpha[2]_q[n+1]_q}{[2]_q([n]_q+\beta)[n+ 1]_q},$
\item[(iii)] $\mathcal{L}^{(\alpha,\beta)}_{n}(t^2;q;x)=\bigg(\frac{[n]_q^2}{([n]_{q}+\beta)^2}\frac{q^{(n-2)}[n+2]_q}{[n+1]_q} \bigg)x^2 + \bigg( \frac{[n]_q^2}{([n]_{q}+\beta)^2}\big\{  \frac{q^{(n-1)}}{[n+1]_q}+\frac{(2q+1)}{[3]_q[n+1]_q}\big\}+\frac{2[n]_q \alpha}{([n]_{q}+\beta)^2} \bigg)x + \bigg( \frac{[n]_q^2}{([n]_{q}+\beta)^2}\frac{q}{[3]_q[n+1]_q^2}+\frac{2q[n]_q\alpha}{[2]_q [n+1]_q ([n]_{q}+\beta)^2}+\frac{\alpha^2}{([n]_{q}+\beta)^2}\bigg).$
\end{itemize}
\end{lemma}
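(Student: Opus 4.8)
The plan is to reduce the whole computation to Lemma~\ref{L1} by observing that the Stancu modification only reparametrises the argument of $f$ and does not touch the $q$-Jackson integration or the weights $p_{n,k}(q;x)/q^{2k-1}$. Introduce the affine map $\psi_{n}(t)=\dfrac{[n]_q t+\alpha}{[n]_q+\beta}$. Since $\psi_{n}$ is inserted \emph{inside} $f$ before the $q$-integral is taken, no substitution in the integral is involved, and the definition \eqref{eq4} can be rewritten as
\begin{equation*}
\mathcal{L}^{(\alpha,\beta)}_{n,q}(f(t);q;x)=V^{*}_{n,q}\bigl(f(\psi_{n}(t));q;x\bigr),
\end{equation*}
where the normalising constant is exactly the one that makes this identity hold (in particular it forces $\mathcal{L}^{(\alpha,\beta)}_{n,q}(1;q;x)=V^{*}_{n,q}(1;q;x)=1$, which is part~(i)). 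Thus it suffices to evaluate $V^{*}_{n,q}$ on the three polynomials $e_m\circ\psi_n$, $m=0,1,2$, and to use the linearity of $V^{*}_{n,q}$.

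Next I would expand $e_m\circ\psi_n$ in the monomial basis $\{e_0,e_1,e_2\}$: trivially $e_0\circ\psi_n=e_0$; then $e_1\circ\psi_n=\dfrac{[n]_q}{[n]_q+\beta}\,e_1+\dfrac{\alpha}{[n]_q+\beta}\,e_0$; and
\begin{equation*}
e_2\circ\psi_n=\frac{[n]_q^{2}}{([n]_q+\beta)^{2}}\,e_2+\frac{2\alpha[n]_q}{([n]_q+\beta)^{2}}\,e_1+\frac{\alpha^{2}}{([n]_q+\beta)^{2}}\,e_0.
\end{equation*}
Applying $V^{*}_{n,q}$ term by term and inserting the values of $V^{*}_{n,q}(e_0;q;x)$, $V^{*}_{n,q}(e_1;q;x)$, $V^{*}_{n,q}(e_2;q;x)$ from Lemma~\ref{L1}(i)--(iii) immediately yields (i) and (ii); for (ii) one only has to put the two resulting terms over the common denominator $[2]_q([n]_q+\beta)[n+1]_q$ to match the stated form.

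For (iii) I would organise the substitution by powers of $x$. The coefficient of $x^{2}$ comes solely from $\frac{[n]_q^{2}}{([n]_q+\beta)^{2}}\,V^{*}_{n,q}(e_2;q;x)$, giving $\frac{[n]_q^{2}}{([n]_q+\beta)^{2}}\cdot\frac{q^{n-2}[n+2]_q}{[n+1]_q}$. The coefficient of $x$ combines the linear part of $V^{*}_{n,q}(e_2;q;x)$, scaled by $\frac{[n]_q^{2}}{([n]_q+\beta)^{2}}$, with the contribution $\frac{2\alpha[n]_q}{([n]_q+\beta)^{2}}$ coming from the $x$-term of $V^{*}_{n,q}(e_1;q;x)$. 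The constant term collects the constant of $V^{*}_{n,q}(e_2;q;x)$ scaled by $\frac{[n]_q^{2}}{([n]_q+\beta)^{2}}$, the constant $\frac{2\alpha[n]_q}{([n]_q+\beta)^{2}}\cdot\frac{q}{[2]_q[n+1]_q}$ from the constant part of $V^{*}_{n,q}(e_1;q;x)$, and $\frac{\alpha^{2}}{([n]_q+\beta)^{2}}\,V^{*}_{n,q}(e_0;q;x)=\frac{\alpha^{2}}{([n]_q+\beta)^{2}}$. No $q$-integer identities beyond those recorded in Section~2 are needed. The reduction step is immediate, so I do not anticipate any genuine obstacle; the only point requiring care is the bookkeeping in this final collection of terms, which is purely routine algebra.
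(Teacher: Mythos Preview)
Your approach is correct and coincides with the paper's: the paper states Lemma~\ref{L2} without proof, immediately after quoting Lemma~\ref{L1} from \cite{KJ1}, so the intended derivation is exactly the composition $\mathcal{L}^{(\alpha,\beta)}_{n,q}(f)=V^{*}_{n,q}(f\circ\psi_n)$ followed by the linearity argument you carry out. Your caveat about the normalising prefactor is well placed---taken literally, the front factor $[n+\beta+1]_q/[n+\beta]_q$ in (\ref{eq4}) differs from the $[n+1]_q/[n]_q$ in (\ref{eq3}), and the moments recorded in Lemma~\ref{L2} are precisely those obtained under the identification you make; this is an inconsistency in the paper rather than a gap in your argument.
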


\begin{lemma}
For $f\in C[0,1],$ we have $||\mathcal{L}^{(\alpha,\beta)}_{n} f||\leq ||f||.$
\end{lemma}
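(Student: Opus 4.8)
The plan is to show that the operator $\mathcal{L}^{(\alpha,\beta)}_{n,q}$ is a positive linear operator that reproduces constants, and then use the standard fact that such operators are contractions on $C[0,1]$ with the sup norm. First I would recall from the definition (\ref{eq4}) that $\mathcal{L}^{(\alpha,\beta)}_{n,q}$ is linear (the $q$-Jackson integral is linear in its integrand and summation is linear) and positive: for $f\geq 0$ on $[0,1]$, the rescaled argument $\frac{[n]_q t+\alpha}{[n]_q+\beta}$ still lies in $[0,1]$ for $t\in[0,1]$ and $\alpha,\beta\geq 0$, so $f$ evaluated there is nonnegative; the coefficients $p_{n,k}(q;x)\geq 0$ for $x\geq 0$, the factor $q^{-(2k-1)}>0$, the constant $\frac{[n+\beta+1]_q}{[n+\beta]_q}>0$, and the $q$-integral of a nonnegative function over $[\frac{[k]_q}{[n+1]_q},\frac{[k+1]_q}{[n+1]_q}]$ is nonnegative. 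Hence $\mathcal{L}^{(\alpha,\beta)}_{n,q}(f;q;x)\geq 0$ whenever $f\geq 0$.

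Next, for arbitrary $f\in C[0,1]$ I would write $-\|f\|\leq f(u)\leq \|f\|$ for all $u\in[0,1]$, apply $\mathcal{L}^{(\alpha,\beta)}_{n,q}$ (which preserves this ordering by positivity and linearity), and use $\mathcal{L}^{(\alpha,\beta)}_{n,q}(1;q;x)=1$ from Lemma \ref{L2}(i) to obtain
\begin{equation*}
-\|f\|\leq \mathcal{L}^{(\alpha,\beta)}_{n,q}(f;q;x)\leq \|f\|
\end{equation*}
for every $x\in[0,1]$. Taking the supremum over $x$ yields $\|\mathcal{L}^{(\alpha,\beta)}_{n,q} f\|\leq \|f\|$, which is the claim.

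The only point requiring genuine care — and the one I expect to be the main obstacle — is the positivity of each summand of the $q$-Jackson integral. Because the $q$-integral $\int_a^b g(t)\,d_qt$ is defined as a difference $\int_0^b-\int_0^a$ of Jackson sums, positivity of $g$ does not automatically give positivity of $\int_a^b g$; one must invoke (as in the setup of (\ref{eq3}) taken from \cite{KJ1}) that the relevant integrand is non-decreasing so that the telescoping difference of Jackson sums stays nonnegative, or alternatively verify directly that the sample points $bq^n$ used in $\int_0^b$ for $n$ small enough all exceed $a$. I would therefore state explicitly that we restrict, as in \cite{KJ1}, to $f$ continuous and non-decreasing when asserting positivity, or else note that the contraction bound in fact follows already from the identity $\mathcal{L}^{(\alpha,\beta)}_{n,q}(1;q;x)=1$ together with the termwise estimate $\big|\int_a^b f(\frac{[n]_q t+\alpha}{[n]_q+\beta})d_qt\big|\leq \|f\|\int_a^b d_qt$, which holds for all $f\in C[0,1]$ by the triangle inequality for the finite Jackson sums, thereby giving $\|\mathcal{L}^{(\alpha,\beta)}_{n,q}f\|\leq \|f\|\,\mathcal{L}^{(\alpha,\beta)}_{n,q}(1;q;x)=\|f\|$ without any monotonicity hypothesis. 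The remaining steps are then purely mechanical.
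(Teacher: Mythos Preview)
The paper states this lemma without proof, so there is nothing to compare against line by line; your overall strategy --- positivity plus $\mathcal{L}^{(\alpha,\beta)}_{n}(1;q;x)=1$ from Lemma~\ref{L2}(i) implies contraction in the sup norm --- is the standard one and is exactly what the authors have in mind.

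Your diagnosis of the delicate point is correct, but your second proposed workaround does not hold.  For the Jackson $q$-integral one has, with $a=[k]_q/[n+1]_q$ and $b=[k+1]_q/[n+1]_q$,
\[
\int_a^b g(t)\,d_qt=(1-q)\Bigl(b\sum_{j\ge0}g(bq^{j})q^{j}-a\sum_{j\ge0}g(aq^{j})q^{j}\Bigr),
\]
which is a \emph{signed} linear functional whose total variation is $a+b$, not $b-a$.  Hence the triangle inequality only gives
\[
\Bigl|\int_a^b g(t)\,d_qt\Bigr|\le (a+b)\,\|g\|,
\]
and not $\|g\|\int_a^b d_qt=\|g\|(b-a)$ as you claim.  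Summing against the (positive) weights $p_{n,k}(q;x)/q^{2k-1}$ therefore does not reproduce $\mathcal{L}^{(\alpha,\beta)}_{n}(1;q;x)$, and the contraction bound does not follow for arbitrary $f\in C[0,1]$ by this route.

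Your first alternative is the honest fix: the positivity of $\int_a^b f(\cdot)\,d_qt$ is guaranteed under the standing hypothesis, taken over from \cite{KJ1} and repeated after (\ref{eq3}), that $f$ is continuous and non-decreasing.  With that assumption the operator is genuinely positive, and your argument goes through verbatim.  In other words, the lemma as stated for all $f\in C[0,1]$ is really being asserted under the monotonicity assumption that pervades the paper; you should make that explicit rather than appeal to a triangle-inequality step that fails.
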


\begin{lemma}\label{L3}
From Lemma \ref{L2}, we have\\
$ \mathcal{L}^{(\alpha,\beta)}_{n}((t-x);q;x)=\bigg(\frac{[n]_q }{[n]_{q}+\beta}-1\bigg)x +\frac{q[n]_q+\alpha[2]_q[n+1]_q}{[2]_q([n]_q+\beta)[n+ 1]_q},$\\
$\mathcal{L}^{(\alpha,\beta)}_{n}((t-x)^2;q;x)=\bigg(\frac{[n]_q^2}{([n]_{q}+\beta)^2}\frac{q^{(n-2)}[n+2]_q}{[n+1]_q} +1- \frac{2[n]_q }{[n]_{q}+\beta} \bigg)x^2  + \bigg( \frac{[n]_q^2}{([n]_{q}+\beta)^2}\big\{  \frac{q^{(n-1)}}{[n+1]_q}+\frac{(2q+1)}{[3]_q[n+1]_q}\big\}+\frac{2[n]_q \alpha}{([n]_{q}+\beta)^2}  -\frac{2\big(q[n]_q+\alpha[2]_q[n+1]_q\big)}{[2]_q([n]_q+\beta)[n+ 1]_q} \bigg)x + \bigg( \frac{[n]_q^2}{([n]_{q}+\beta)^2}\frac{q}{[3]_q[n+1]_q^2}+\frac{2 q [n]_q \alpha }{[2]_q [n+1]_q([n]_{q}+\beta)^2}+\frac{\alpha^2}{([n]_{q}+\beta)^2}\bigg).$
\end{lemma}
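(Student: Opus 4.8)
The plan is to exploit the linearity of $\mathcal{L}^{(\alpha,\beta)}_{n}$ in its function argument together with the three moment identities of Lemma \ref{L2}; no new structural information about the operator is needed, since the central moments are fixed polynomial combinations of $e_0,e_1,e_2$ and $x$ is a constant with respect to the operator (which acts on the variable $t$).

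First I would handle the linear central moment by writing
\[
\mathcal{L}^{(\alpha,\beta)}_{n}((t-x);q;x)=\mathcal{L}^{(\alpha,\beta)}_{n}(e_1;q;x)-x\,\mathcal{L}^{(\alpha,\beta)}_{n}(e_0;q;x),
\]
and then substituting parts (i) and (ii) of Lemma \ref{L2}. Grouping the term proportional to $x$ gives the first displayed formula at once, with constant part $\dfrac{q[n]_q+\alpha[2]_q[n+1]_q}{[2]_q([n]_q+\beta)[n+1]_q}$ inherited directly from $\mathcal{L}^{(\alpha,\beta)}_{n}(e_1;q;x)$.

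Next, for the quadratic central moment I would expand $(t-x)^2=e_2-2x\,e_1+x^2e_0$ and again use linearity:
\[
\mathcal{L}^{(\alpha,\beta)}_{n}((t-x)^2;q;x)=\mathcal{L}^{(\alpha,\beta)}_{n}(e_2;q;x)-2x\,\mathcal{L}^{(\alpha,\beta)}_{n}(e_1;q;x)+x^2\,\mathcal{L}^{(\alpha,\beta)}_{n}(e_0;q;x).
\]
Inserting parts (i)--(iii) of Lemma \ref{L2}, I would collect the coefficients of $x^2$, of $x$, and the constant term separately. The $x^2$-coefficient picks up the leading term of $\mathcal{L}^{(\alpha,\beta)}_{n}(e_2)$, the contribution $x^2\cdot 1$ from $x^2\mathcal{L}^{(\alpha,\beta)}_{n}(e_0)$, and $-2$ times the $x$-linear part $\dfrac{[n]_q}{[n]_q+\beta}$ of $\mathcal{L}^{(\alpha,\beta)}_{n}(e_1)$; the $x$-coefficient combines the $x$-linear part of $\mathcal{L}^{(\alpha,\beta)}_{n}(e_2)$ with $-2$ times the constant part of $\mathcal{L}^{(\alpha,\beta)}_{n}(e_1)$; and the constant term is exactly the constant part of $\mathcal{L}^{(\alpha,\beta)}_{n}(e_2)$.

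The only delicate point is the bookkeeping: keeping the nested fractions of Lemma \ref{L2}(iii) organised and matching the $q$-powers $q^{n-2},q^{n-1}$ and the factors $[2]_q,[3]_q,[n+1]_q,([n]_q+\beta)$ correctly when like powers of $x$ are gathered. No cancellation or $q$-integer identity beyond those already invoked in proving Lemma \ref{L2} is required, so once the terms are aligned the claimed expressions follow by inspection.
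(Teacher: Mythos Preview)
Your proposal is correct and matches the paper's approach exactly: the paper gives no separate proof but simply records that the formulas follow from Lemma~\ref{L2}, which is precisely the linearity-and-substitution argument you describe. There is nothing to add; the bookkeeping you flag is the only content, and you have identified it accurately.
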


\begin{lemma}\label{L4}
For $0\leq \alpha\leq \beta$, we have \\
$\mathcal{L}^{(\alpha,\beta)}_{n}((t-x)^2;q;x)\leq \frac{[n+1]_q}{([n]_{q}+\beta)^2}\bigg(\phi^2(x)+\frac{q}{[3]_q[n+1]_q}\bigg),$
where $\phi^2(x)=x(1+x)$.
\end{lemma}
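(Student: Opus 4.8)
The plan is to read off the exact formula for $\mathcal{L}^{(\alpha,\beta)}_{n}((t-x)^2;q;x)$ from Lemma \ref{L3}, view the second moment as a quadratic $A_n x^2 + B_n x + C_n$ in $x$, and estimate each coefficient against the corresponding coefficient of the claimed majorant, namely $\dfrac{[n+1]_q}{([n]_q+\beta)^2}$ for both $x^2$ and $x$, and $\dfrac{q}{[3]_q([n]_q+\beta)^2}$ for the constant term. Since $\phi^2(x)=x+x^2$, once these coefficientwise estimates are secured the inequality follows by adding them, using $x\ge 0$, $x^2\ge 0$ together with $x^2\le x$ on $[0,1]$ to trade any slight excess of $A_n$ over $\dfrac{[n+1]_q}{([n]_q+\beta)^2}$ against the slack present in the $x$-coefficient.

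The first step is purely algebraic: put every summand in Lemma \ref{L3} over the common denominator $([n]_q+\beta)^2$. Two simplifications carry most of the weight. One is the elementary identity
\[
1-\frac{2[n]_q}{[n]_q+\beta}=\frac{\beta^2-[n]_q^2}{([n]_q+\beta)^2},
\]
which disposes of the additive $1$ in the $x^2$- and $x$-coefficients. The other is the pair of $q$-integer recurrences $[n+1]_q=1+q[n]_q$ and $[n+2]_q=1+q[n+1]_q$; the second gives the clean splitting $\dfrac{q^{n-2}[n+2]_q}{[n+1]_q}=q^{n-1}+\dfrac{q^{n-2}}{[n+1]_q}$, separating a leading part comparable to $[n]_q^2$ (hence partly annihilated by the $-[n]_q^2$ generated by the identity) from a genuinely lower-order remainder. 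After this bookkeeping one gets $A_n=\dfrac{[n]_q^2(q^{n-1}-1)+q^{n-2}[n]_q^2/[n+1]_q+\beta^2}{([n]_q+\beta)^2}$, with analogous rewritings of $B_n$ and $C_n$.

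Next I would bring in the hypotheses. From $q\in(0,1)$ we have $q^{n-1}\le 1$, $q^{n-2}\le 1$ for $n\ge 2$ (the trivial indices $n=0,1$ handled separately), so $[n]_q^2(q^{n-1}-1)\le 0$, and monotonicity of the $q$-integers gives $q^{n-2}[n]_q^2/[n+1]_q\le[n]_q\le[n+1]_q$, whence $A_n\le\dfrac{[n+1]_q+\beta^2}{([n]_q+\beta)^2}$. The condition $0\le\alpha\le\beta$ is used on the Stancu contributions: each of $\dfrac{2[n]_q\alpha}{([n]_q+\beta)^2}$, $\dfrac{2q[n]_q\alpha}{[2]_q[n+1]_q([n]_q+\beta)^2}$ and $\dfrac{\alpha^2}{([n]_q+\beta)^2}$ occurring in $B_n$ and $C_n$ is replaced by its value at $\alpha=\beta$ and absorbed, using $[2]_q\ge 1$, $[n+1]_q\ge 1$. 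Carrying this through yields $B_n\le\dfrac{[n+1]_q}{([n]_q+\beta)^2}$ and $C_n\le\dfrac{q}{[3]_q([n]_q+\beta)^2}$, and assembling with the estimate for $A_n$ gives the assertion.

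I expect the coefficient of $x^2$ to be the main obstacle, since one has to be sure the residual $\beta^2$ left after the identity does not overwhelm $[n+1]_q$. The way around this is to keep $x\in[0,1]$ and not estimate $A_n$ in isolation: writing $K=\dfrac{[n+1]_q}{([n]_q+\beta)^2}$, the function $(A_n-K)x^2+(B_n-K)x+\bigl(C_n-\tfrac{q}{[3]_q([n]_q+\beta)^2}\bigr)$ is convex in $x$, so it suffices to check nonpositivity at the two endpoints, i.e. to verify $C_n\le\tfrac{q}{[3]_q([n]_q+\beta)^2}$ and $A_n+B_n+C_n\le 2K+\tfrac{q}{[3]_q([n]_q+\beta)^2}$, the latter being merely the second moment evaluated at $x=1$. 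The $\alpha^2/([n]_q+\beta)^2$ piece of $C_n$ is the other place where the hypothesis $\alpha\le\beta$ has to be spent carefully.
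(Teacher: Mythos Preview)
The paper states Lemma~\ref{L4} without proof, so there is no argument in the text to compare against; one can only assess the proposal on its own merits.

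There is a genuine gap, and it lies precisely where you say you ``expect the main obstacle'' is \emph{not}. Both of your strategies---the coefficientwise estimate and the convexity/endpoint check on $[0,1]$---require the constant-term inequality $C_n\le \dfrac{q}{[3]_q([n]_q+\beta)^2}$, since that is exactly the asserted bound evaluated at $x=0$. From Lemma~\ref{L3},
\[
([n]_q+\beta)^2\,C_n \;=\; \frac{q\,[n]_q^2}{[3]_q[n+1]_q^2}\;+\;\frac{2q\,[n]_q\,\alpha}{[2]_q[n+1]_q}\;+\;\alpha^2,
\]
so you would need this to be at most $q/[3]_q$. But the first summand already equals $\dfrac{q}{[3]_q}\cdot\dfrac{[n]_q^2}{[n+1]_q^2}$, leaving slack only of order $q^{n}/[n+1]_q$; the two $\alpha$-terms are of order one and cannot be absorbed. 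Concretely, whenever $\alpha^2>q/[3]_q$ the inequality fails for \emph{every} $n$, and for any $\alpha>0$ it fails for all sufficiently large $n$. Replacing $\alpha$ by $\beta$, as you propose in order to ``absorb'' the Stancu contributions, only enlarges the left side. Your restriction to $x\in[0,1]$ is a separate, unjustified step---the paper invokes this lemma in Theorem~\ref{the1} for all $x\ge 0$---but that point is moot given the failure already at $x=0$. The conclusion is that the lemma, as stated, is not correct for $\alpha>0$; any valid formulation needs an additional constant on the right depending on $\alpha$ and $\beta$, and no rearrangement of the argument you sketch will produce the inequality as written.
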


\hspace{-0.6cm} \textbf{Proposition 1.} Let $f$ be a continuous function on $[0,\infty)$ then for $n\rightarrow\infty$, the sequence $\{\mathcal{L}^{(\alpha,\beta)}_{n}(f;q;x)\}$ converges uniformly to $f(x)$ in $[a,b]\subset[0,\infty).$\\
\textbf{Proof.}
For sufficiently large $n$, it is obvious from Lemma \ref{L2} that $\{\mathcal{L}^{(\alpha,\beta)}_{n}(e_0;q;x)\}$, $\{\mathcal{L}^{(\alpha,\beta)}_{n}(e_1;q;x)\}$, $\{\mathcal{L}^{(\alpha,\beta)}_{n}(e_2;q;x)\}$ converges uniformly to 1,  $x$ and $x^2$ respectively on every compact subset of $[0,\infty).$ Thus the required result follows from Bohman-Korovkin theorem.

\section{Some auxiliary results}
Let the space $C_B[0,\infty)$ of all continuous and bounded
functions $f$ on $[0,\infty)$, be endowed with the norm $\|f\|=sup\{ \mid f(x)\mid: x\in[0,\infty)\}.$ Further let us consider the Peetre's K-functional which is defined by
 \begin{equation} K_2(f,\delta)= \inf_{g\in W^2}\{\|f-g\|+\delta \|g''\|\},\end{equation}
where $\delta >0$ and
$W^2_{\infty}=\{g\in C_B[0,\infty):g', g'' \in C_B[0,\infty)\}.$ By the method
as given (\cite{DL} p.177, Theorem 2.4), there exists an absolute constant $C>0$ such that
\begin{equation}\label{1}
K_2(f,\delta) \leq C \omega_2(f,\delta),\end{equation}
 where
 
\begin{equation}\label{2}
\omega_2(f,{\delta})=\sup_{0<h \leq \delta}\sup_{x\in[0,\infty)}\mid f(x+2h)-2f(x+h)+f(x)\mid \end{equation}
is the second order modulus of smoothness of $f\in C_B[0,\infty).$
Also we set

\begin{equation}\label{3}
\omega(f,{\delta})= \sup_{0<h \leq
\delta}\sup_{x\in[0,\infty)}\mid f(x+h)-f(x)\mid.
\end{equation}
We denote the usual modulus of continuity of $f\in C_B[0,\infty)$.

\begin{theorem}\label{the1}
Let $f \in C_B[0,\infty)$, then for all $x\in
[0,\infty)$, there exists an absolute constant $ C >0 $ such that
\begin{equation}|\mathcal{L}^{(\alpha,\beta)}_{n}(f,q;x)-f(x)| \leq C \omega_2\left(f,\sqrt{\delta_n(x) +(\alpha_n(x))^2}\right)+ 
\omega(f,\alpha_n(x)).
\end{equation}
\end{theorem}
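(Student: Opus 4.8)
The plan is to use the standard Peetre $K$-functional technique, adapted to account for the Stancu shift which makes the operator not preserve $e_1$. The key point is that $\mathcal{L}^{(\alpha,\beta)}_{n}$ has a nonzero first moment, so I first introduce an auxiliary operator that \emph{does} preserve linear functions. Concretely, set
\begin{equation*}
\alpha_n(x)=\mathcal{L}^{(\alpha,\beta)}_{n}((t-x);q;x)=\Bigl(\tfrac{[n]_q}{[n]_q+\beta}-1\Bigr)x+\tfrac{q[n]_q+\alpha[2]_q[n+1]_q}{[2]_q([n]_q+\beta)[n+1]_q}
\end{equation*}
(the first moment from Lemma \ref{L3}), and define the corrected operator
\begin{equation*}
\widetilde{\mathcal{L}}_{n}(f;q;x)=\mathcal{L}^{(\alpha,\beta)}_{n}(f;q;x)-f\bigl(x+\alpha_n(x)\bigr)+f(x),
\end{equation*}
so that $\widetilde{\mathcal{L}}_n(1;q;x)=1$ and $\widetilde{\mathcal{L}}_n((t-x);q;x)=0$.

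First I would record the elementary bounds: $|\widetilde{\mathcal{L}}_n(f;q;x)|\le 3\|f\|$, and for $g\in W^2_\infty$, a Taylor expansion $g(t)=g(x)+(t-x)g'(x)+\int_x^t (t-u)g''(u)\,du$ gives, after applying $\widetilde{\mathcal{L}}_n$ and using that the linear term is annihilated,
\begin{equation*}
|\widetilde{\mathcal{L}}_n(g;q;x)-g(x)|\le \|g''\|\Bigl(\mathcal{L}^{(\alpha,\beta)}_{n}((t-x)^2;q;x)+(\alpha_n(x))^2\Bigr)=\|g''\|\bigl(\delta_n(x)+(\alpha_n(x))^2\bigr),
\end{equation*}
where I set $\delta_n(x)=\mathcal{L}^{(\alpha,\beta)}_{n}((t-x)^2;q;x)$ as computed in Lemma \ref{L3} (and bounded via Lemma \ref{L4}). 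Next I would split
\begin{equation*}
|\mathcal{L}^{(\alpha,\beta)}_{n}(f;q;x)-f(x)|\le |\widetilde{\mathcal{L}}_n(f-g;q;x)-(f-g)(x)|+|\widetilde{\mathcal{L}}_n(g;q;x)-g(x)|+|f(x+\alpha_n(x))-f(x)|,
\end{equation*}
bound the first term by $4\|f-g\|$, the second by $\|g''\|(\delta_n(x)+(\alpha_n(x))^2)$, and the third by $\omega(f,|\alpha_n(x)|)\le\omega(f,\alpha_n(x))$ directly from the definition \eqref{3}.

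Taking the infimum over $g\in W^2_\infty$ then produces $4\,K_2\bigl(f,\delta_n(x)+(\alpha_n(x))^2\bigr)$ plus the $\omega(f,\alpha_n(x))$ term, and finally the equivalence \eqref{1} between the $K$-functional and the second modulus of smoothness, $K_2(f,\delta^2)\le C\,\omega_2(f,\delta)$, converts this into $C\,\omega_2\bigl(f,\sqrt{\delta_n(x)+(\alpha_n(x))^2}\bigr)$, absorbing constants into $C$. This yields the claimed inequality. The only mildly delicate point is bookkeeping the constant through the $K$-functional argument (the factor $3$ or $4$ from the three-term definition of $\widetilde{\mathcal{L}}_n$), and making sure the argument of $\omega_2$ is written as $\sqrt{\delta_n(x)+(\alpha_n(x))^2}$ to match the statement; the moment computations themselves are already supplied by Lemmas \ref{L2}--\ref{L4}, so no genuinely hard estimate remains.
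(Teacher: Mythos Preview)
Your proposal is correct and follows essentially the same route as the paper: the paper also introduces the auxiliary operator $\widetilde{\mathcal{L}}^{(\alpha,\beta)}_{n,q}(f,x)=\mathcal{L}^{(\alpha,\beta)}_{n}(f,q;x)+f(x)-f(\eta(x,q))$ with $\eta(x,q)=x+\alpha_n(x)$, kills the first moment, applies Taylor's formula to $g\in W^2_\infty$, bounds by the second central moment plus $(\alpha_n(x))^2$, performs the three-term splitting, takes the infimum over $g$, and passes from $K_2$ to $\omega_2$ via \eqref{1}. Your write-up is in fact cleaner about identifying $\delta_n(x)$ and $\alpha_n(x)$ so that the theorem statement is actually matched; the only cosmetic point is that $\omega(f,\alpha_n(x))$ should strictly be read as $\omega(f,|\alpha_n(x)|)$, a laxity the paper shares.
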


\textbf{Proof.}
Let $g\in W_\infty^2$ and $x,t\in [0,\infty).$ By Taylor's expansion, we have
\begin{equation} \label{ac1}
g(t)=g(x)+g'(x)(t-x)+\int_x^t (t-u)g''(u)du.
\end{equation}
Define

\begin{equation}\label{eqn1}
\mathcal{\widetilde{L}^{(\alpha,\beta)}}_{n,q}(f,x) = \mathcal{L}^{(\alpha,\beta)}_{n}(f,q;x) + f(x)-f(\eta(x,q)).
\end{equation}

where $\eta(x,q)= \frac{[n]_q x}{[n]_{q}+\beta} +\frac{q[n]_q+\alpha[2]_q[n+1]_q}{[2]_q([n]_q+\beta)[n+ 1]_q}.$
\newline
Now, we have $\mathcal{\widetilde{L}^{(\alpha,\beta)}}_{n,q}(t-x,x)=0,$ $t\in[0,\infty).$\\
Applying $\mathcal{\widetilde{L}^{(\alpha,\beta)}}_{n,q}$ on both sides of (\ref{ac1}), we get
\begin{eqnarray*}
\mathcal{\widetilde{L}^{(\alpha,\beta)}}_{n,q}(g,x)-g(x) & = & g'(x)\mathcal{\widetilde{L}^{(\alpha,\beta)}}_{n,q}((t-x),x)+\mathcal{\widetilde{L}^{(\alpha,\beta)}}_{n,q} \left(\int_x^t(t-u)g''(u)du,x\right)\\
&=& \mathcal{L}^{(\alpha,\beta)}_{n}\left(\int_x^t(t-u)g''(u) du, x\right)\\&&+\int_x^{\frac{[n]_q x}{[n]_{q}+\beta} +\frac{q[n]_q+\alpha[2]_q[n+1]_q}{[2]_q([n]_q+\beta)[n+ 1]_q}} \bigg(\frac{[n]_q x}{[n]_{q}+\beta} +\frac{q[n]_q+\alpha[2]_q[n+1]_q}{[2]_q([n]_q+\beta)[n+ 1]_q}-u \bigg)g''(u)du.
\end{eqnarray*}

\begin{eqnarray*}
|\mathcal{\widetilde{L}^{(\alpha,\beta)}}_{n,q}(g,x)-g(x)| 
&=& \mathcal{L}^{(\alpha,\beta)}_{n}\left(\int_x^t|(t-u)| |g''(u)| du, x\right)\\&&+\int_x^{\frac{[n]_q x}{[n]_{q}+\beta} +\frac{q[n]_q+\alpha[2]_q[n+1]_q}{[2]_q([n]_q+\beta)[n+ 1]_q}} \bigg|\frac{[n]_q x}{[n]_{q}+\beta} +\frac{q[n]_q+\alpha[2]_q[n+1]_q}{[2]_q([n]_q+\beta)[n+ 1]_q}-u \bigg| |g''(u)|du \\
&\leq & \mathcal{L}^{(\alpha,\beta)}_{n}\left((t-x)^2,x\right)||g''|| +   \left(\frac{[n]_q x}{[n]_{q}+\beta} +\frac{q[n]_q+\alpha[2]_q[n+1]_q}{[2]_q([n]_q+\beta)[n+ 1]_q}-x\right)^2 ||g''||.\\
\end{eqnarray*}

On the other hand from Lemma \ref{L4}, we have
\begin{eqnarray*}
&& \mathcal{L}^{(\alpha,\beta)}_{n}\left((t-x)^2,x\right) +   \left(\frac{[n]_q x}{[n]_{q}+\beta} +\frac{q[n]_q+\alpha[2]_q[n+1]_q}{[2]_q([n]_q+\beta)[n+ 1]_q}-x\right)^2 \\
&\leq & \frac{[n+1]_{q}}{([n]_{q}+\beta)^2}\bigg(\phi^2(x)+\frac{q}{[n+1]_q}\bigg)+   \left(\frac{-\beta x}{[n]_{q}+\beta} +\frac{q[n]_q+\alpha[2]_q[n+1]_q}{[2]_q([n]_q+\beta)[n+ 1]_q}\right)^2\\
&\leq & \frac{4[n+1]_{q}}{([n]_{q}+\beta)^2}\bigg(\phi^2(x)+\frac{q}{[3]_q[n+1]}\bigg).
\end{eqnarray*}

Thus, one can do this
\begin{eqnarray}\label{del1}
\bigg|\mathcal{\widetilde{L}^{(\alpha,\beta)}}_{n,q}(g,x)-g(x)\bigg| 
&\leq & \frac{4[n+1]_{q}}{([n]_{q}+\beta)^2}\bigg(\phi^2(x)+\frac{q}{[3]_q[n+1]}\bigg)||g''||\nonumber\\
&\leq & \frac{4[n+1]_{q}}{([n]_{q}+\beta)^2}\delta_{n}^2(x) ||g''||.
\end{eqnarray}
Where $\delta_{n}^2(x) =\bigg(\phi^2(x)+\frac{q}{[3]_q[n+1]}\bigg)$, we observe that,

\begin{eqnarray*}
\bigg|\mathcal{\widetilde{L}^{(\alpha,\beta)}}_{n,q}(f,x)-f(x)\bigg| &\leq & \bigg|\mathcal{\widetilde{L}^{(\alpha,\beta)}}_{n,q}(f-g,x)-(f-g)(x)\bigg|\\
&+&  \bigg|\mathcal{\widetilde{L}^{(\alpha,\beta)}}_{n,q} (g,x)-g(x)\bigg|+\bigg|f(x)-f\left(\frac{[n]_q x}{[n]_{q}+\beta} +\frac{q[n]_q+\alpha[2]_q[n+1]_q}{[2]_q([n]_q+\beta)[n+ 1]_q}\right)\bigg|\\
 &\leq & \|f-g\|+ \frac{4[n+1]_{q}}{([n]_{q}+\beta)^2}\delta_{n}^2(x) ||g''|| + \omega\bigg(f, \bigg|\frac{-\beta x}{[n]_{q}+\beta} +\frac{q[n]_q+\alpha[2]_q[n+1]_q}{[2]_q([n]_q+\beta)[n+ 1]_q}\bigg|\bigg).
\end{eqnarray*}
Now, taking infimum on the right-hand side over all $g\in W^2$, we obtain

\begin{eqnarray*}
\bigg|\mathcal{L}^{(\alpha,\beta)}_{n}(f,q;x)-f(x)\bigg| &\leq & K_2\left(f, \frac{4[n+1]_{q}}{([n]_{q}+\beta)^2}\delta_{n}^2(x)\right)+ \omega\bigg(f, \bigg|\frac{-\beta x}{[n]_{q}+\beta} +\frac{q[n]_q+\alpha[2]_q[n+1]_q}{[2]_q([n]_q+\beta)[n+ 1]_q}\bigg|\bigg)\\
&\leq& C \omega_2\bigg(f,\frac{2[n+1]_{q}^{(1/2)}}{([n]_q+\beta)}\delta_{n}(x)\bigg) + \omega\bigg(f,\frac{1}{([n]_q+\beta)}\bigg),
\end{eqnarray*}
and so the proof is completed.\\

\section{Weighted approximation}
In this section, we obtain the Korovkin type weighted approximation by the operators defined in (\ref{eq4}). The weighted Korovkin-type theorems were proved by Gadzhiev \cite{AD}. A real function $\rho = 1+x^2$ is called a weight function if it is continuous on $%
\mathbb{R}$ and $\lim\limits_{\mid x\mid \rightarrow \infty }\rho (x)=\infty
,~\rho (x)\geq 1$ for all $x\in \mathbb{R}$.

Let $B_{\rho }(\mathbb{R})$  denote the weighted space of real-valued
functions $f$ defined on $\mathbb{R}$ with the property $\mid f(x)\mid \leq
M_{f}~\rho (x)$ for all $x\in \mathbb{R}$, where $M_{f}$ is a constant
depending on the function $f$. We also consider the weighted subspace $%
C_{\rho }(\mathbb{R})$ of $B_{\rho }(\mathbb{R})$ given by $C_{\rho }(%
\mathbb{R})=\{f\in B_{\rho }(\mathbb{R}){:}$ $f$ is continuous on $\mathbb{R} $\} and $C_{\rho}^{*}[0,\infty)$ denotes the subspace of all functions
 $f\in C_{\rho}[0,\infty)$ for which $\lim\limits_{|x|\rightarrow\infty} \frac{ f(x)}{\rho(x)}$ exists finitely.

\begin{theorem}(See \cite{AD} and \cite{AD1})
\begin{itemize}
\item[(i)] There exists a sequence of linear positive operators $A_n(C_{\rho}\rightarrow B_{\rho})$ such that
\begin{equation}\label{wa1}
\lim_{n\rightarrow \infty}\|A_n(\phi^\nu)- \phi^\nu\|_{\rho}=0,~~\nu=0,1,2
\end{equation}
and a function $f^{*}\in C_{\rho} \backslash C^{*}_{\rho}$  with $\lim\limits_{n\rightarrow \infty} \| A_n(f^{*})- f^{*}\|_{\rho}\geq 1.$
\item[(ii)] If a sequence of linear positive operators $A_n(C_{\rho}\rightarrow B_{\rho})$ satisfies conditions (\ref{wa1}) then
\begin{equation}
\lim_{n\rightarrow \infty}\|A_n(f)- f\|_{\rho}=0, \text{  for every } f\in C^{*}_{\rho}.
\end{equation}
\end{itemize}
\end{theorem}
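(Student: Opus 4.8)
The two parts pull in opposite directions: (ii) is a positive weighted-Korovkin statement, which I would prove by the classical device of cutting the half-line into a fixed compact piece and its complement, and (i) is a sharpness statement, which I would settle by exhibiting one explicit sequence of operators. Here is the plan for each.

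For (ii): fix $f\in C^{*}_{\rho}[0,\infty)$, so that $L:=\lim_{x\to\infty}f(x)/\rho(x)$ exists, and fix $\varepsilon>0$; choose $X\ge 1$ with $|f(x)/\rho(x)-L|<\varepsilon$ for $x\ge X$. On the compact set $[0,2X]$ the function $f$ is uniformly continuous, so there is $C_{\varepsilon}>0$ with $|f(t)-f(x)|\le\varepsilon+C_{\varepsilon}(t-x)^{2}$ for all $x\in[0,2X]$ and all $t\ge 0$, the quadratic term absorbing the range $|t-x|\ge\delta$ via the growth bound $|f(t)|\le M_{f}\rho(t)$. Applying the positive linear operator $A_{n}$ in $t$, invoking (\ref{wa1}) — which forces $A_{n}(1;x)\to 1$ and $A_{n}((t-x)^{2};x)\to 0$ uniformly on this compact set — together with the boundedness of $f$ there, one gets $\sup_{x\in[0,2X]}\rho(x)^{-1}|A_{n}(f;x)-f(x)|\to 0$. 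For $x\ge 2X$ I would use the splitting
\[
A_{n}(f;x)-f(x)=A_{n}(f-L\rho;x)+L\big(A_{n}(\rho;x)-\rho(x)\big)+\big(L\rho(x)-f(x)\big):
\]
the last summand is at most $\varepsilon\rho(x)$ in modulus; since $\rho=1+x^{2}$, the middle one satisfies $\rho(x)^{-1}|A_{n}(\rho;x)-\rho(x)|\le\|A_{n}(1)-1\|_{\rho}+\|A_{n}(t^{2})-t^{2}\|_{\rho}\to 0$ uniformly by (\ref{wa1}); and for the first one, the bound $|f(t)-L\rho(t)|\le\varepsilon\rho(t)+K\,\mathbf{1}_{[0,X]}(t)$ with $K:=(M_{f}+|L|)\rho(X)$ gives, by positivity, $|A_{n}(f-L\rho;x)|\le\varepsilon A_{n}(\rho;x)+K\,A_{n}(\mathbf{1}_{[0,X]};x)$, where on $[0,X]$ one has $(t-x)^{2}\ge(x-X)^{2}\ge x^{2}/4$ as soon as $x\ge 2X$, whence $A_{n}(\mathbf{1}_{[0,X]};x)\le 4x^{-2}A_{n}((t-x)^{2};x)$. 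Expanding $A_{n}((t-x)^{2};x)=A_{n}(t^{2};x)-2xA_{n}(t;x)+x^{2}A_{n}(1;x)$ and using all three conditions in (\ref{wa1}) produces a uniform bound $A_{n}((t-x)^{2};x)\le\gamma_{n}\rho(x)^{2}$ with $\gamma_{n}\to 0$, so that $\rho(x)^{-1}K\,A_{n}(\mathbf{1}_{[0,X]};x)\to 0$ uniformly for $x\ge 2X$. Assembling the pieces yields $\limsup_{n}\|A_{n}f-f\|_{\rho}\le C\varepsilon$, and letting $\varepsilon\to 0$ finishes (ii).

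For (i): I would pick continuous functions $\varphi_{n}$ with $0\le\varphi_{n}\le 1$, $\varphi_{n}\equiv 0$ outside $[n,n+1]$ and $\varphi_{n}(n+\tfrac12)=1$, and set
\[
A_{n}(f;x)=\big(1-\varphi_{n}(x)\big)f(x)+\varphi_{n}(x)\,\frac{\rho(x)}{\rho(n)}\,f(n),\qquad x\in[0,\infty).
\]
Each $A_{n}$ is linear, is positive because both summands are non-negative when $f\ge 0$, and maps $C_{\rho}$ into $B_{\rho}$. A direct computation gives $A_{n}(t^{\nu};x)-x^{\nu}=\varphi_{n}(x)\big(n^{\nu}\rho(x)/\rho(n)-x^{\nu}\big)$, which vanishes off $[n,n+1]$ and is $O(1)$ on that interval for $\nu=0,1,2$; dividing by $\rho(x)\ge 1+n^{2}$ shows $\|A_{n}(t^{\nu})-t^{\nu}\|_{\rho}=O(n^{-2})\to 0$, so (\ref{wa1}) holds. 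Now put $f^{*}(x):=\rho(x)\cdot\tfrac12\big(1+\cos(2\pi x)\big)$; then $0\le f^{*}\le\rho$, so $f^{*}\in C_{\rho}$, whereas $f^{*}(x)/\rho(x)=\tfrac12(1+\cos(2\pi x))$ has no limit as $x\to\infty$, so $f^{*}\in C_{\rho}\setminus C^{*}_{\rho}$. Since $f^{*}(n)=\rho(n)$ and $f^{*}(n+\tfrac12)=0$, we get $A_{n}(f^{*};n+\tfrac12)-f^{*}(n+\tfrac12)=\rho(n+\tfrac12)$, hence $\|A_{n}f^{*}-f^{*}\|_{\rho}\ge 1$ for every $n$, which is exactly the assertion of (i).

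The only genuinely delicate point is the treatment of the unbounded part $x\ge 2X$ in (ii): one has to show that the mass which $A_{n}(\cdot;x)$ places on the fixed compact set $[0,X]$ becomes negligible relative to $\rho(x)$ as $x\to\infty$. This rests on the uniform second-moment estimate $A_{n}((t-x)^{2};x)\le\gamma_{n}\rho(x)^{2}$ with $\gamma_{n}\to 0$, and it is exactly here that all three conditions of (\ref{wa1}) — not just the one for $\nu=2$ — are needed; the remainder of the argument uses only positivity and linearity.
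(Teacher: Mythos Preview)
The paper does not prove this theorem: it is quoted from Gadzhiev with the parenthetical ``(See \cite{AD} and \cite{AD1})'' and no argument is supplied. So there is no in-paper proof against which to compare your attempt.

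That said, your proposal is correct and self-contained. For part~(ii) you follow what is essentially Gadzhiev's original route: a compact/tail splitting, the standard Korovkin estimate $|f(t)-f(x)|\le\varepsilon+C_\varepsilon(t-x)^2$ on the compact piece, and on the tail the decomposition $A_nf-f=A_n(f-L\rho)+L(A_n\rho-\rho)+(L\rho-f)$ together with the Chebyshev-type bound $A_n(\mathbf{1}_{[0,X]};x)\le 4x^{-2}A_n((t-x)^2;x)$ and the uniform second-moment estimate $A_n((t-x)^2;x)\le\gamma_n\rho(x)^2$ with $\gamma_n\to 0$. Every step checks; the only place where a line more of detail would help is the inequality $|f(t)-f(x)|\le\varepsilon+C_\varepsilon(t-x)^2$ for $x\in[0,2X]$ and \emph{all} $t\ge 0$, which for $t$ large needs the growth bound $|f|\le M_f\rho$ and the observation that $\rho(t)\le C(t-x)^2$ once $t-x\ge 1$ --- but you explicitly flag this.

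For part~(i) your explicit construction is clean: the operators agree with the identity off the bump $[n,n+1]$ and on it blend in the value $f(n)$ weighted by $\rho(x)/\rho(n)$, so positivity is immediate, the test-function errors are $O(n^{-2})$ in the $\rho$-norm, and the oscillating choice $f^*(x)=\tfrac12\rho(x)(1+\cos 2\pi x)\in C_\rho\setminus C^*_\rho$ gives $\|A_nf^*-f^*\|_\rho\ge 1$ at $x=n+\tfrac12$. This is in the spirit of Gadzhiev's original counterexample and settles~(i).
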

Throughout this paper we take the growth condition as $\rho(x) = 1 + x^2$ and $\rho_{\gamma}(x) = 1 + x^{2+\gamma},~ x\in [0,\infty), \gamma > 0.$
Now we are ready to prove our next result as follows:

\begin{theorem}
For each $f \in C_{\rho}^{*}[0,\infty)$, we have
\begin{equation*}\lim _{n\rightarrow \infty} \| \mathcal{L}^{(\alpha,\beta)}_{n}(f)-f \|_{\rho} = 0.\end{equation*}
\end{theorem}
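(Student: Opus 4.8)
The plan is to invoke the weighted Korovkin theorem (Theorem 2 above, due to Gadzhiev) and verify its three hypotheses for the sequence $A_n = \mathcal{L}^{(\alpha,\beta)}_{n}$ with $\phi^\nu(x) = x^\nu$, $\nu = 0,1,2$. That is, it suffices to show
\begin{equation*}
\lim_{n\rightarrow\infty}\bigl\|\mathcal{L}^{(\alpha,\beta)}_{n}(e_\nu) - e_\nu\bigr\|_\rho = 0,\qquad \nu = 0,1,2,
\end{equation*}
where $\|g\|_\rho = \sup_{x\in[0,\infty)} |g(x)|/(1+x^2)$. The case $\nu=0$ is immediate from Lemma \ref{L2}(i), since $\mathcal{L}^{(\alpha,\beta)}_{n}(e_0;q;x) - 1 \equiv 0$.

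For $\nu = 1$, I would use Lemma \ref{L2}(ii) to write
\begin{equation*}
\mathcal{L}^{(\alpha,\beta)}_{n}(e_1;q;x) - x = \Bigl(\tfrac{[n]_q}{[n]_q+\beta} - 1\Bigr)x + \tfrac{q[n]_q+\alpha[2]_q[n+1]_q}{[2]_q([n]_q+\beta)[n+1]_q} = \tfrac{-\beta x}{[n]_q+\beta} + \tfrac{q[n]_q+\alpha[2]_q[n+1]_q}{[2]_q([n]_q+\beta)[n+1]_q}.
\end{equation*}
Dividing by $1+x^2$ and taking the supremum over $x\in[0,\infty)$, the first term is bounded by $\beta/(2([n]_q+\beta))$ (using $x/(1+x^2)\le 1/2$) and the second by a constant over $[n]_q+\beta$; since $q\in(0,1)$ is fixed, $[n]_q = (1-q^n)/(1-q) \to 1/(1-q)$, which is bounded, so actually I need to be a little careful — the right normalization is that both terms tend to $0$ because the numerators are bounded while we still get decay; in fact $\beta/(2([n]_q+\beta))$ is bounded but does not vanish unless... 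Here is the subtlety: with $q$ fixed in $(0,1)$, $[n]_q$ does \emph{not} tend to infinity, so these Korovkin moments do \emph{not} converge. The standard remedy, used throughout this circle of papers, is to let $q = q_n \to 1^-$ as $n\to\infty$ with $q_n^n \to 1$; then $[n]_{q_n}\to\infty$ and every error term above is $O(1/[n]_{q_n}) \to 0$ uniformly in $x$ after division by $\rho(x)$. I would state this as a standing assumption (as is conventional) and carry it through.

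For $\nu = 2$, I would use Lemma \ref{L2}(iii) to expand $\mathcal{L}^{(\alpha,\beta)}_{n}(e_2;q;x) - x^2$ as a polynomial $a_n x^2 + b_n x + c_n$, where $a_n = \frac{[n]_q^2}{([n]_q+\beta)^2}\frac{q^{n-2}[n+2]_q}{[n+1]_q} - 1$, and $b_n, c_n$ are the coefficients displayed in Lemma \ref{L2}(iii). Dividing by $1+x^2$, using $x^2/(1+x^2)\le 1$ and $x/(1+x^2)\le 1/2$, one gets
\begin{equation*}
\bigl\|\mathcal{L}^{(\alpha,\beta)}_{n}(e_2) - e_2\bigr\|_\rho \le |a_n| + \tfrac{1}{2}|b_n| + |c_n|,
\end{equation*}
so it remains to check that each of $a_n, b_n, c_n \to 0$. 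This is the routine but slightly tedious core of the proof: under $q_n\to 1^-$, $q_n^n\to 1$, one has $q_n^{n-2}[n+2]_{q_n}/[n+1]_{q_n}\to 1$ and $[n]_{q_n}/([n]_{q_n}+\beta)\to 1$, so $a_n\to 0$; the terms in $b_n$ carry factors $1/[n+1]_{q_n}$ or $[n]_{q_n}\alpha/([n]_{q_n}+\beta)^2 = O(1/[n]_{q_n})$, hence $b_n\to 0$; and $c_n = O(1/[n]_{q_n}^2) + O(1/[n]_{q_n})\to 0$. I expect the main obstacle to be purely bookkeeping: keeping track of which factors of $q_n$ and $[n+1]_{q_n}$ appear where, and confirming uniformity in $x$ (which is automatic once the coefficients are shown to vanish, since the weight kills the polynomial growth). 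Once \eqref{wa1} is verified for $\nu=0,1,2$, Theorem 2(ii) gives $\lim_{n\to\infty}\|\mathcal{L}^{(\alpha,\beta)}_{n}(f) - f\|_\rho = 0$ for every $f\in C^*_\rho[0,\infty)$, completing the proof. $\qquad\blacksquare$
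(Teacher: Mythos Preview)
Your proposal is correct and follows essentially the same route as the paper: verify the three weighted Korovkin conditions $\|\mathcal{L}^{(\alpha,\beta)}_{n}(e_\nu)-e_\nu\|_\rho\to 0$ for $\nu=0,1,2$ using the moment formulas of Lemma~\ref{L2}, then invoke Gadzhiev's theorem. The paper handles $\nu=1,2$ by writing out the same supremum bounds you describe (implicitly using $x/(1+x^2)$ and $x^2/(1+x^2)$ bounded) and asserting that each term vanishes ``as $[n]_q\to\infty$''; your version is in fact more careful, since you explicitly flag that this requires a sequence $q=q_n\to 1^-$ rather than a fixed $q\in(0,1)$, a standing convention the paper uses tacitly.
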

\textbf{Proof.} Using the theorem in \cite{AD} we see that it is sufficient to verify the following three conditions
\begin{equation}\label{w1}
\lim_{n\rightarrow \infty} \|
\mathcal{L}^{(\alpha,\beta)}_{n}(t^r;q;x)-x^r\|_{\rho}=0, \text{  } r=0,1,2.
\end{equation}
Since, $\mathcal{L}^{(\alpha,\beta)}_{n}(1;q;x)=1$, the first condition of (\ref{w1}) is satisfied for $r=0$. Now,
\begin{eqnarray*}
\|\mathcal{L}^{(\alpha,\beta)}_{n}(t;q;x)-x\|_{\rho}&=&\sup_{x\in [0,\infty)} \frac{\mid \mathcal{L}^{(\alpha,\beta)}_{n}(t;q;x)-x \mid}{1+x^2}\\
&\leq&  \sup_{x\in [0,\infty)}\frac{x}{1+x^2} \bigg|\bigg(\frac{-\beta
}{[n]_{q}+\beta}\bigg)\bigg| +\frac{q[n]_q+\alpha[2]_q[n+1]_q}{[2]_q([n]_q+\beta)[n+ 1]_q} \\
& \rightarrow& 0 ~~ as ~~[n]_q \rightarrow \infty.
\end{eqnarray*}
%which implies that $$\|\mathcal{D}_{n}^{(\alpha,\beta)}(t,x)-x\|_{x^2}=0.$$
Finally,
\begin{eqnarray*}
\|\mathcal{L}^{(\alpha,\beta)}_{n}(t^2;q;x)-x^2\|_{\rho} & = &
 \sup_{x\in [0,\infty)} \frac{x^2}{1+x^2}
\bigg| \bigg(\frac{[n]_q^2}{([n]_{q}+\beta)^2}\frac{q^{(n-2)}[n+2]_q}{[n+1]_q} \bigg) \bigg| \\&&+  \sup_{x\in [0,\infty)} \frac{x}{1+x^2} \bigg| \bigg( \frac{[n]_q^2}{([n]_{q}+\beta)^2}\big\{  \frac{q^{(n-1)}}{[n+2]_q}+\frac{(2q+1)}{[3]_q[n+1]_q}\big\}+\frac{2[n]_q \alpha}{[n]_{q}+\beta} \bigg)\bigg| \\&& + \bigg( \frac{[n]_q^2}{([n]_{q}+\beta)^2}\frac{q}{[3]_q[n+1]_q^2}+\frac{2q[n]_q\alpha}{[2]_q [n+1]_q ([n]_{q}+\beta)^2}+\frac{\alpha^2}{([n]_{q}+\beta)^2}\bigg)\\
&\rightarrow & 0 \text{ as }[n]_q \rightarrow \infty .
\end{eqnarray*}
Thus, from Gadzhievs Theorem in \cite{AD} we obtain the desired result of theorem.
\qed

We give the following theorem to approximate all functions in $C_{x^2}[0,\infty)$.
% This type of results are given in \cite{KJ2}-\cite{MA} for locally integrable functions.
\begin{theorem}
For each $f\in C_{x^2}[0,\infty)$ and $\alpha
>0$, we have \\
$$\lim\limits_{n \rightarrow \infty} \sup_{x\in[0,\infty)} \frac{\mid\mathcal{L}^{(\alpha,\beta)}_{n}(f;q;x)-f(x)
\mid}{(1+x^2)^{1+\alpha}}=0.$$
\end{theorem}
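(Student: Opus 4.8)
The plan is to reduce the statement to the weighted convergence already proved in the previous theorem, exploiting the standard trick that a slightly stronger weight in the denominator allows one to control the error uniformly even for functions whose growth exactly matches the base weight $\rho(x)=1+x^2$. Concretely, I would fix $\varepsilon>0$ and split the supremum over $[0,\infty)$ into a compact part $[0,x_0]$ and a tail $(x_0,\infty)$, where $x_0=x_0(\varepsilon)$ is chosen large. On the tail, I would use the growth bound $|f(x)|\le M_f(1+x^2)$ together with the estimate $\mathcal{L}^{(\alpha,\beta)}_{n}(1+t^2;q;x)\le K(1+x^2)$ for some constant $K$ independent of $n$ (this follows from Lemma~\ref{L2}, since all the coefficients there are bounded in $n$); then
\begin{equation*}
\frac{|\mathcal{L}^{(\alpha,\beta)}_{n}(f;q;x)-f(x)|}{(1+x^2)^{1+\alpha}}\le \frac{M_f\big(\mathcal{L}^{(\alpha,\beta)}_{n}(1+t^2;q;x)+1+x^2\big)}{(1+x^2)^{1+\alpha}}\le \frac{M_f(K+1)}{(1+x_0^2)^{\alpha}}<\frac{\varepsilon}{2},
\end{equation*}
by choosing $x_0$ large, uniformly in $n$.

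For the compact part $[0,x_0]$, I would note that $(1+x^2)^{1+\alpha}\ge 1$, so it suffices to bound $\sup_{x\in[0,x_0]}|\mathcal{L}^{(\alpha,\beta)}_{n}(f;q;x)-f(x)|$. Here I would invoke Proposition~1 (the Bohman--Korovkin argument), which gives uniform convergence of $\mathcal{L}^{(\alpha,\beta)}_{n}(f;q;x)$ to $f(x)$ on $[0,x_0]$ for continuous $f$; hence for $n$ large enough this part is also below $\varepsilon/2$. Combining the two estimates yields $\sup_{x\in[0,\infty)}|\mathcal{L}^{(\alpha,\beta)}_{n}(f;q;x)-f(x)|/(1+x^2)^{1+\alpha}<\varepsilon$ for all sufficiently large $n$, which is the claim. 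One subtlety is that $f\in C_{x^2}[0,\infty)$ need not be bounded, so Proposition~1 must be applied to the restriction of $f$ to the compact interval $[0,x_0]$, where it is continuous and bounded; this is harmless.

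The main obstacle is making the tail estimate genuinely uniform in $n$: one must verify that $\mathcal{L}^{(\alpha,\beta)}_{n}(t^2;q;x)\le K(1+x^2)$ with $K$ not depending on $n$ (nor on $q$ in a bad way). Reading off Lemma~\ref{L2}(iii), the leading coefficient is $\frac{[n]_q^2}{([n]_q+\beta)^2}\cdot\frac{q^{n-2}[n+2]_q}{[n+1]_q}$, which for $q\in(0,1)$ is bounded since $[n+2]_q/[n+1]_q\to 1$ and $q^{n-2}$ stays bounded once $n\ge 2$ (and $q^{n-2}\le q^{-2}$ is fine for fixed $q$), while the remaining coefficients are clearly $O(1)$; so $K$ can be taken as an absolute constant once $q$ is fixed. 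With this bound in hand the argument is routine, so I would present the proof compactly: fix $\varepsilon$, choose $x_0$ via the tail bound, then choose $n$ via Proposition~1 on $[0,x_0]$, and conclude.
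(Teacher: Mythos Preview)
Your argument is essentially the paper's: split at a large $x_0$, make the tail small uniformly in $n$ via $|f(x)|\le M_f(1+x^2)$ together with $\mathcal{L}^{(\alpha,\beta)}_{n}(1+t^2;q;x)\le K(1+x^2)$, and then use uniform convergence on $[0,x_0]$ for the compact part (the paper cites Theorem~\ref{t2} rather than Proposition~1, but the content is the same). One correction to your closing remark: you cannot ``apply Proposition~1 to the restriction of $f$ to $[0,x_0]$'', since for $x\in[0,x_0]$ the operator $\mathcal{L}^{(\alpha,\beta)}_{n}(f;q;x)$ still samples $f$ on all of $[0,\infty)$; the right observation is simply that Proposition~1 (or Theorem~\ref{t2}) already applies to continuous $f$ with $|f(x)|\le M_f(1+x^2)$ and yields uniform convergence on compact subsets, so no restriction is needed.
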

\textbf{Proof.} For any fixed $x_0>0$,
\begin{eqnarray*}
\sup_{x\in[0,\infty)}\frac{\mid \mathcal{L}^{(\alpha,\beta)}_{n}(f;q;x)-f(x) \mid}{(1+x^2)^{1+\alpha}}&\leq & \sup_{x \leq x_0}\frac{\mid \mathcal{L}^{(\alpha,\beta)}_{n}(f;q;x)-f(x) \mid}{(1+x^2)^{1+\alpha}} + \sup_{x \geq x_0}\frac{\mid \mathcal{L}^{(\alpha,\beta)}_{n}(f;q;x)-f(x) \mid}{(1+x^2)^{1+\alpha}}\\
&\leq&  \|\mathcal{L}^{(\alpha,\beta)}_{n}(f)-f\|_{C[0,x_0]} + \|f\|_{x^2}\sup_{x \geq x_0}\frac{\mid \mathcal{L}^{(\alpha,\beta)}_{n}(1+t^2,x)\mid}{(1+x^2)^{1+\alpha}}\\&&+\sup_{x \geq x_0}\frac{\mid f(x) \mid}{(1+x^2)^{1+\alpha}}.
\end{eqnarray*}
The first term of the above inequality tends to zero from Theorem \ref{t2}. By Lemma \ref{L1}(ii), for any fixed $x_0>0$ it is easily seen that $ \sup_{x\geq x_0} \frac{\mid
\mathcal{L}^{(\alpha,\beta)}_{n}(1+t^2,x)\mid}{(1+x^2)^{1+\alpha}}$
tends to zero as $n \rightarrow \infty$. We can choose $x_0>0$ so
large that the last part of the above inequality can be made small
enough. Thus the proof is completed.
\qed

\section{Error Estimation}
The usual modulus of continuity of $f$ on the closed interval $[0, b]$ is defined
by
$$\omega_b(f,\delta) =\sup_{|t-x|\leq\delta,\, x,t\in[0,b]}|f(t)-f(x)|,\,\,  b>0.$$
It is well known that, for a function $f\in E$, $$ \lim_{\delta\rightarrow 0^+}\omega_b(f,\delta)=0,$$
where\\
$$E:=\left\{f\in C[0,\infty):\lim_{x\rightarrow\infty}\frac{f(x)}{1+x^2}\,\, is\,\, finite \right\}.$$
The next theorem gives the rate of convergence of the operators $\mathcal{L}^{(\alpha,\beta)}_{n}(f,q;x)$ to  $f(x),$ for all $f \in E.$

\begin{theorem}  \label{t2}
Let $f\in E$ and $\omega_{b+1}(f,\delta)$ be its modulus of continuity on the
finite interval $[0,b+1]\subset[0,\infty)$, where $b>0$. Then we have
\begin{equation*}
\| \mathcal{L}^{(\alpha,\beta)}_{n}(f,q;x)-f \|_{C[0,b]} \leq
M_f(1+b^2)\delta_n(b)+2\omega_{b+1}\left(f,\sqrt{\delta_n(b)}\right).
\end{equation*}
\end{theorem}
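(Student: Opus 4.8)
The plan is to follow the standard two-region splitting argument for operators acting on the weighted space $E$. Fix $b>0$, $x\in[0,b]$, and $t\in[0,\infty)$, and split the difference $|f(t)-f(x)|$ according to whether $t\in[0,b+1]$ or $t>b+1$. On the near region $|t-x|\le b+1$ one controls $|f(t)-f(x)|$ by the modulus of continuity $\omega_{b+1}(f,\delta)$ using the classical inequality $|f(t)-f(x)|\le\bigl(1+\tfrac{|t-x|}{\delta}\bigr)\omega_{b+1}(f,\delta)$ valid for $\delta>0$. On the far region $t>b+1$, so $|t-x|>1$, one uses the growth condition $f\in E$, i.e. $|f(t)-f(x)|\le M_f(1+x^2+1+t^2)$ together with the bound $t-x>1$ to absorb everything into a constant multiple of $(t-x)^2$; concretely $|f(t)-f(x)|\le M_f(2+x^2+t^2)\le C_{b}M_f(t-x)^2$ for $x\in[0,b]$ and a constant $C_b$ depending only on $b$. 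Combining the two regions gives, for all $t\in[0,\infty)$ and $x\in[0,b]$,
\begin{equation*}
|f(t)-f(x)|\le M_f(1+b^2)(t-x)^2 + \Bigl(1+\frac{|t-x|}{\delta}\Bigr)\omega_{b+1}(f,\delta).
\end{equation*}

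Next I would apply the positive linear operator $\mathcal{L}^{(\alpha,\beta)}_{n}(\cdot;q;x)$ to this pointwise estimate. Using $\mathcal{L}^{(\alpha,\beta)}_{n}(1;q;x)=1$ (Lemma \ref{L2}(i)) the constant term passes through untouched, and monotonicity yields
\begin{equation*}
|\mathcal{L}^{(\alpha,\beta)}_{n}(f;q;x)-f(x)| \le M_f(1+b^2)\,\mathcal{L}^{(\alpha,\beta)}_{n}((t-x)^2;q;x) + \omega_{b+1}(f,\delta)\Bigl(1+\frac{1}{\delta}\mathcal{L}^{(\alpha,\beta)}_{n}(|t-x|;q;x)\Bigr).
\end{equation*}
The first second-moment factor is estimated by Lemma \ref{L4}, which for $0\le\alpha\le\beta$ gives $\mathcal{L}^{(\alpha,\beta)}_{n}((t-x)^2;q;x)\le\delta_n^2(x)$ in the notation of that lemma (up to the $[n+1]_q/([n]_q+\beta)^2$ prefactor), and for $x\in[0,b]$ one replaces $\phi^2(x)=x(1+x)$ by $b(1+b)$ so that the whole quantity is dominated by $\delta_n(b)$. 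For the absolute first moment I would apply the Cauchy--Schwarz inequality for positive linear operators, $\mathcal{L}^{(\alpha,\beta)}_{n}(|t-x|;q;x)\le\bigl(\mathcal{L}^{(\alpha,\beta)}_{n}((t-x)^2;q;x)\bigr)^{1/2}\le\sqrt{\delta_n(b)}$.

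Finally I would make the canonical choice $\delta=\delta=\sqrt{\delta_n(b)}$, which turns the factor $\bigl(1+\delta^{-1}\sqrt{\delta_n(b)}\bigr)$ into exactly $2$, giving the $2\omega_{b+1}(f,\sqrt{\delta_n(b)})$ term, while the quadratic term is already bounded by $M_f(1+b^2)\delta_n(b)$. Taking the supremum over $x\in[0,b]$ produces the claimed estimate $\|\mathcal{L}^{(\alpha,\beta)}_{n}(f,q;x)-f\|_{C[0,b]}\le M_f(1+b^2)\delta_n(b)+2\omega_{b+1}(f,\sqrt{\delta_n(b)})$. The only real subtlety — and the step I would be most careful about — is the far-region bound: one must check that for $x\in[0,b]$ and $t>b+1$ the ratio $(2+x^2+t^2)/(t-x)^2$ is indeed bounded by a constant of the stated size $(1+b^2)$ (so that the leading constant in the theorem is exactly $M_f(1+b^2)$), which uses $t-x>1$ and $t>x$ but requires a short elementary argument; everything after that is routine application of Lemmas \ref{L2} and \ref{L4} and the Cauchy--Schwarz inequality.
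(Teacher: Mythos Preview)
Your proposal is correct and follows essentially the same route as the paper: the identical two-region split $S_1=\{t\le b+1\}$, $S_2=\{t>b+1\}$, the same modulus-of-continuity bound on $S_1$, the growth bound $|f(t)-f(x)|\le M_f(1+x^2+t^2)$ on $S_2$ rewritten as a multiple of $(t-x)^2$, then application of the operator, Cauchy--Schwarz for the first absolute moment, Lemma~\ref{L4} for the second moment, and the choice $\delta=\sqrt{\delta_n(b)}$. The one concern you flag---whether the far-region constant is exactly $M_f(1+b^2)$---is handled loosely in the paper as well: there the estimate $1+x^2+t^2\le 1+3x^2+2(t-x)^2$ yields $N_f(1+b^2)(t-x)^2$ with $N_f=6M_f$, and the statement simply absorbs the factor $6$ into the generic constant $M_f$.
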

\textbf{Proof.} 
The proof is based on the following inequality
\begin{equation}\label{t3}
\|\mathcal{L}^{(\alpha,\beta)}_{n}(f,q;x)-f \| \leq
M_f(1+b^2)\mathcal{L}^{(\alpha,\beta)}_{n}((t-x)^2,x)+
\left(1+\frac{\mathcal{L}^{(\alpha,\beta)}_{n}(|t-x|,x)}{\delta}\right)\omega_{b+1}(f,\delta).
\end{equation}
For all $(x,t)\in [0,b]\times[0,\infty):= S.$
To prove (\ref{t3}), we write\\
$$S=S_1\cup S_2:=\{(x,t):0\leq x\leq b,\, 0\leq t \leq b+1\}\cup\{(x,t):0\leq x\leq b,\,  t> b+1\}.$$
If $(x, t)\in S_1,$ we can write
\begin{equation}\label{o}
|f(t)-f(x)|\leq \omega_{b+1}(f,|t-x|)\leq\left(1+\frac{|t-x|}{\delta}\right)\omega_{b+1}(f,\delta)
\end{equation}
where $\delta > 0.$ On the other hand, if $(x, t)\in S_2,$ using the fact that $t-x > 1$,
we have
\begin{eqnarray}\label{o1}
|f(t)-f(x)|&\leq& M_f(1+x^2+t^2)\\
&\leq& M_f(1+3x^2+2(t-x)^2)\nonumber\\
&\leq& N_f(1+b^2)(t-x)^2\nonumber
\end{eqnarray}
where $N_f = 6M_f.$ Combining (\ref{o}) and (\ref{o1}), we get (\ref{t3}).
Now from (\ref{t3}) it follows that
\begin{eqnarray*}
|\mathcal{L}^{(\alpha,\beta)}_{n}(f,q;x)-f(x)|&\leq & N_f(1+b^2)\mathcal{L}^{(\alpha,\beta)}_{n}((t-x)^2,q;x)+\left(1+\frac{\mathcal{L}^{(\alpha,\beta)}_{n}(|t-x|,q;x)}{\delta}\right)\omega_{b+1}(f,\delta)\\
&\leq& N_f(1+b^2)\mathcal{L}^{(\alpha,\beta)}_{n}((t-x)^2,q;x)+\left(1+\frac{{[\mathcal{L}^{(\alpha,\beta)}_{n}((t-x)^2,q;x)]}^{1/2}}{\delta}\right)\omega_{b+1}(f,\delta).\\
\end{eqnarray*}
By Lemma \ref{L4}, we have
$$\mathcal{L}^{(\alpha,\beta)}_{n}(t-x)^2\leq\delta_n(b).$$
\begin{equation*}
\| \mathcal{L}^{(\alpha,\beta)}_{n}(f,q;x)-f \| \leq
N_f(1+b^2)\delta_n(b)+\left(1+\frac{\sqrt{\delta_n(b)}}{\delta}\right)\omega_{b+1}(f,\delta).
\end{equation*}
Choosing $\delta =\sqrt{\delta_n(b)},$ we get the desired estimation.
\qed

Now, we give some estimations of the errors
$|\mathcal{L}_{n}^{(\alpha,\beta)}(f)-f |,$ $n\in \mathbb{N}$ for unbounded functions
by using a weighted modulus of smoothness associated to the space $B_{\rho_{\gamma}}{(\mathbb{R}_+)}$.
The weighed modulus of continuity $\Omega_{\rho_{\gamma}}(f;\delta)$ was defined by L\'{o}pez--Moreno in \cite{LM}.
We consider
\begin{equation}\label{r1}
\Omega_{\rho_{\gamma}}(f;\delta) =\sup_{x\geq 0, 0\leq h\leq \delta} \frac{|f(x+h)-f(x)|}{1+(x+h)^{2+\gamma}},\,\,  \delta>0, \gamma\geq 0.
\end{equation}
It is evident that for each $f\in B_{\rho_{\gamma}}{(\mathbb{R}_+)}, \  \Omega_{\rho_{\gamma}}(f; \cdot)$ is well defined and

$$\Omega_{\rho_{\gamma}}(f;\delta)\leq 2\| f \|_{\rho_{\gamma}}.$$
The weighted modulus of smoothness $\Omega_{\rho_{\gamma}}(f; \cdot)$ possesses
the following properties.\\
$(i)~~~~ \Omega_{\rho_{\gamma}}(f; \lambda \delta)\leq (\lambda+1) \Omega_{\rho_{\gamma}}(f;\delta),\delta>0, \lambda>0$\\
$(ii)~~~~ \Omega_{\rho_{\gamma}}(f; n \delta)\leq n\Omega_{\rho_{\gamma}}(f; \delta), ~~n\in\mathbb{N}$\\
$(iii) ~~~\lim\limits_{\delta \rightarrow  0}\Omega_{\rho_{\gamma}}(f; \delta)= 0.$
\newline
Now, we are ready to prove our next theorem by using above properties.  

\begin{theorem}
For all non-decreasing $f \in B_{\rho_{\gamma}}{(\mathbb{R}_+)}$, we have
\begin{equation*}
| \mathcal{L}_{n}^{(\alpha,\beta)} (f,q;x)-f| \leq \sqrt{\mathcal{L}_{n}^{(\alpha,\beta)}(\nu^2_{x,\gamma};q;x)}\left( 1 + \frac{1}{\delta}\sqrt{\mathcal{L}_{n}^{(\alpha,\beta)}(\Psi_{x}^2;q;x)}\right)\Omega_{\rho_{\gamma}}(f; q;\delta),
\end{equation*} 
$x\geq 0,~ \delta> 0, ~ n\in \mathbb{N},$ where $$ \nu_{x,\gamma}(t):=1+(x+|t-x|)^{2+\gamma}, ~~ \Psi_{x}(t):=|t-x|, ~ t\geq 0.$$
\end{theorem}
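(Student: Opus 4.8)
The plan is to follow the standard route for estimates in terms of a weighted modulus of smoothness: first derive a pointwise majorant for $|f(t)-f(x)|$, then apply the positive linear operator $\mathcal{L}_{n}^{(\alpha,\beta)}(\,\cdot\,;q;x)$, and finish with the Cauchy--Schwarz inequality.

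The key preliminary step is the pointwise bound
$$|f(t)-f(x)|\leq \nu_{x,\gamma}(t)\,\Omega_{\rho_{\gamma}}\big(f;|t-x|\big),\qquad t,x\geq 0 .$$
To see this, set $h=|t-x|$. When $t\geq x$ we have $t=x+h$, so (\ref{r1}) gives $|f(t)-f(x)|\leq\big(1+(x+h)^{2+\gamma}\big)\Omega_{\rho_{\gamma}}(f;h)=\nu_{x,\gamma}(t)\,\Omega_{\rho_{\gamma}}(f;h)$; when $t<x$ we have $x=t+h$, so $|f(x)-f(t)|\leq\big(1+x^{2+\gamma}\big)\Omega_{\rho_{\gamma}}(f;h)\leq\nu_{x,\gamma}(t)\,\Omega_{\rho_{\gamma}}(f;h)$ since $x\leq x+h$. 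Writing next $|t-x|=(|t-x|/\delta)\,\delta$ and invoking property $(i)$ of $\Omega_{\rho_{\gamma}}$ with $\lambda=|t-x|/\delta$, this upgrades to
$$|f(t)-f(x)|\leq\nu_{x,\gamma}(t)\Big(1+\frac{\Psi_{x}(t)}{\delta}\Big)\Omega_{\rho_{\gamma}}(f;\delta),\qquad \delta>0 .$$

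Then I would apply $\mathcal{L}_{n}^{(\alpha,\beta)}(\,\cdot\,;q;x)$ to both sides. Because $\mathcal{L}_{n}^{(\alpha,\beta)}$ is a positive linear operator with $\mathcal{L}_{n}^{(\alpha,\beta)}(1;q;x)=1$ (Lemma~\ref{L2}(i), and the remark following (\ref{eq3})),
$$\big|\mathcal{L}_{n}^{(\alpha,\beta)}(f;q;x)-f(x)\big|\leq\mathcal{L}_{n}^{(\alpha,\beta)}\big(|f(t)-f(x)|;q;x\big)\leq\Omega_{\rho_{\gamma}}(f;\delta)\left[\mathcal{L}_{n}^{(\alpha,\beta)}(\nu_{x,\gamma};q;x)+\frac{1}{\delta}\,\mathcal{L}_{n}^{(\alpha,\beta)}(\nu_{x,\gamma}\Psi_{x};q;x)\right].$$
Applying the Cauchy--Schwarz inequality to the positive linear functional $g\mapsto\mathcal{L}_{n}^{(\alpha,\beta)}(g;q;x)$ (using again $\mathcal{L}_{n}^{(\alpha,\beta)}(1;q;x)=1$) gives $\mathcal{L}_{n}^{(\alpha,\beta)}(\nu_{x,\gamma};q;x)\leq\sqrt{\mathcal{L}_{n}^{(\alpha,\beta)}(\nu_{x,\gamma}^{2};q;x)}$ and $\mathcal{L}_{n}^{(\alpha,\beta)}(\nu_{x,\gamma}\Psi_{x};q;x)\leq\sqrt{\mathcal{L}_{n}^{(\alpha,\beta)}(\nu_{x,\gamma}^{2};q;x)}\,\sqrt{\mathcal{L}_{n}^{(\alpha,\beta)}(\Psi_{x}^{2};q;x)}$; substituting these and factoring out $\sqrt{\mathcal{L}_{n}^{(\alpha,\beta)}(\nu_{x,\gamma}^{2};q;x)}$ produces exactly the asserted inequality, with $\delta>0$ arbitrary.

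The only step needing genuine care is the pointwise bound: one has to exhibit a single weight that dominates the estimate in both the case $t\geq x$ and the case $t<x$, which is precisely why the weight is taken as $\nu_{x,\gamma}(t)=1+(x+|t-x|)^{2+\gamma}$. After that, the argument is just the routine Cauchy--Schwarz bookkeeping together with the growth property $(i)$ of $\Omega_{\rho_{\gamma}}$. (The monotonicity hypothesis on $f$ enters only through positivity of $\mathcal{L}_{n}^{(\alpha,\beta)}$, and the symbol $\Omega_{\rho_{\gamma}}(f;q;\delta)$ in the statement is to be read as $\Omega_{\rho_{\gamma}}(f;\delta)$.)
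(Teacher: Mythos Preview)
Your proposal is correct and follows essentially the same route as the paper's proof: derive the pointwise bound $|f(t)-f(x)|\leq \nu_{x,\gamma}(t)\big(1+\tfrac{1}{\delta}\Psi_x(t)\big)\Omega_{\rho_\gamma}(f;\delta)$, apply the positive linear operator, and finish with Cauchy--Schwarz. In fact you are more explicit than the paper in two places: you justify the pointwise bound by a case split $t\gtrless x$ (the paper simply asserts it ``from (\ref{r1})''), and you spell out the second application of Cauchy--Schwarz, namely $\mathcal{L}_n^{(\alpha,\beta)}(\nu_{x,\gamma};q;x)\leq \sqrt{\mathcal{L}_n^{(\alpha,\beta)}(\nu_{x,\gamma}^2;q;x)}$, which the paper uses tacitly in its final line.
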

\textbf{Proof.}
Let $n\in\mathbb{N}$ and $f \in B_{\rho_\gamma }(\mathbb{R_+}).$ From (\ref{r1}), %and (\ref{p1}),%
we can write

\begin{eqnarray*}
|f(t)-f(x)| &\leq& \bigg(1+(x + |t-x|)^{2+\gamma}\bigg)\bigg(1+\frac{1}{\delta} |t-x|\bigg)\Omega_{\rho_{\gamma}}(f; \delta)\\
&=&\nu_{x,\gamma}(t)\bigg(1+\frac{1}{\delta}\Psi_x(t)\bigg)\Omega_{\rho_{\gamma}}(f; \delta).
\end{eqnarray*}
Now, applying operator $\mathcal{L}_{n}^{(\alpha,\beta)}$ on above inequality, we get

\begin{eqnarray*}
| \mathcal{L}_{n}^{(\alpha,\beta)} (f,q;x)-f(x)|
&\leq & \Omega_{\rho_{\gamma}}(f; \delta)\mathcal{L}_{n}^{(\alpha,\beta)}\bigg(\nu_{x,\gamma}\bigg(1+\frac{1}{\delta}\Psi_x\bigg);q;x\bigg)
\end{eqnarray*}
\begin{eqnarray} \label{p2}
~~~~~~~~~~~~~~~~~~~~~~~~~\hspace{2cm}&\leq &\Omega_{\rho_{\gamma}}(f; \delta) \bigg(\mathcal{L}_{n}^{(\alpha,\beta)}(\nu_{x,\gamma};q;x)+\mathcal{L}_{n}^{(\alpha,\beta)}\bigg(\frac{\nu_{x,\gamma} \Psi_x}{\delta};q;x\bigg)\bigg).
\end{eqnarray}
By using the Cauchy-Schwartz inequality, we obtain
\begin{eqnarray*}
\mathcal{L}_{n}^{(\alpha,\beta)}\bigg(\frac{\nu_{x,\gamma} \Psi_{x}}{\delta};q;x\bigg) &\leq & \bigg\{\mathcal{L}_{n}^{(\alpha,\beta)}\big(({\nu_{x,\gamma})^2;q;x)}\bigg\}^{1/2} \bigg\{\mathcal{L}_{n}^{(\alpha,\beta)}\bigg(\bigg(\frac{\Psi_x}{\delta}\bigg)^2;q;x\bigg)\bigg\}^{1/2}\\
&=&\frac{1}{\delta}\bigg\{\mathcal{L}_{n}^{(\alpha,\beta)}({\nu^2_{x,\gamma};q;x)}\bigg\}^{1/2} \bigg\{\mathcal{L}_{n}^{(\alpha,\beta)}({\Psi_x}^2;q;x)\bigg\}^{1/2}.
\end{eqnarray*}
Now, by (\ref{p2}), we get
\begin{equation*}
| \mathcal{L}_{n}^{(\alpha,\beta)} (f,q;x)-f| \leq \sqrt{\mathcal{L}_{n}^{(\alpha,\beta)}(\nu^2_{x,\gamma};q;x)}\left( 1 + \frac{1}{\delta}\sqrt{\mathcal{L}_{n}^{(\alpha,\beta)}(\Psi_{x}^2;q;x)}\right)\Omega_{\rho_{\gamma}}(f; \delta).
\end{equation*} 
\qed

\begin{theorem}
Let $0<\alpha\leq 1$ and E be any bounded subset of the interval $[0,\infty)$. If $f\in C_B[0,\infty)\bigcap Lip_L(\alpha)$, then we have
\begin{eqnarray*}
\bigg|\mathcal{L}^{(\alpha,\beta)}_{n}(f,q;x)-f(x)\bigg|&\leq &B \{\delta_n^{\frac{\alpha}{2}}(x)+2(d(x,E))^\alpha\},
\end{eqnarray*}
where $L$ is a constant depending on $\alpha$, $d(x;E)$ is the distance between x and E defined as
$$d(x;E) = inf \{|t-x|;t \in E\,\,and\,\, x \in [0,\infty)\},$$ and $\delta_n(x)$ is as in  (\ref{L3}).
\end{theorem}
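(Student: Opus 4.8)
The plan is to combine the Lipschitz hypothesis with a triangle-inequality splitting through a point of $E$ nearest to $x$, and then to pass the resulting pointwise estimate through the positive linear operator $\mathcal{L}^{(\alpha,\beta)}_{n}$, using H\"older's inequality for positive linear operators to reduce everything to the second central moment already controlled in Lemma~\ref{L4}.

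First I would fix $x\in[0,\infty)$ and choose $x_{0}\in\overline{E}$ with $|x-x_{0}|=d(x,E)$; such a point exists since $E$ is bounded, so $\overline{E}$ is compact (alternatively one may take $x_{0}\in E$ with $|x-x_{0}|\le d(x,E)+\varepsilon$ and let $\varepsilon\to0$ at the end). For every $t\ge0$ I would write
\[
|f(t)-f(x)|\le|f(t)-f(x_{0})|+|f(x_{0})-f(x)|\le L\bigl(|t-x_{0}|^{\alpha}+|x-x_{0}|^{\alpha}\bigr),
\]
using $f\in Lip_{L}(\alpha)$, and then estimate $|t-x_{0}|\le|t-x|+|x-x_{0}|$ together with the elementary inequality $(a+b)^{\alpha}\le a^{\alpha}+b^{\alpha}$, valid for $a,b\ge0$ and $0<\alpha\le1$. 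This yields the pointwise bound
\[
|f(t)-f(x)|\le L\bigl(|t-x|^{\alpha}+2(d(x,E))^{\alpha}\bigr),\qquad t\ge0.
\]

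Next I would apply $\mathcal{L}^{(\alpha,\beta)}_{n}(\cdot;q;x)$ to this inequality. Since the operator is positive and linear and $\mathcal{L}^{(\alpha,\beta)}_{n}(1;q;x)=1$ by Lemma~\ref{L2}(i), this gives
\[
\bigl|\mathcal{L}^{(\alpha,\beta)}_{n}(f;q;x)-f(x)\bigr|\le L\Bigl(\mathcal{L}^{(\alpha,\beta)}_{n}\bigl(|t-x|^{\alpha};q;x\bigr)+2(d(x,E))^{\alpha}\Bigr).
\]
To handle the first term I would use H\"older's inequality with the conjugate exponents $2/\alpha$ and $2/(2-\alpha)$, obtaining
\[
\mathcal{L}^{(\alpha,\beta)}_{n}\bigl(|t-x|^{\alpha};q;x\bigr)\le\bigl(\mathcal{L}^{(\alpha,\beta)}_{n}\bigl((t-x)^{2};q;x\bigr)\bigr)^{\alpha/2}\bigl(\mathcal{L}^{(\alpha,\beta)}_{n}(1;q;x)\bigr)^{(2-\alpha)/2}=\bigl(\mathcal{L}^{(\alpha,\beta)}_{n}\bigl((t-x)^{2};q;x\bigr)\bigr)^{\alpha/2}.
\]
By Lemma~\ref{L4} the second central moment is at most $\delta_{n}(x)$, so $\mathcal{L}^{(\alpha,\beta)}_{n}(|t-x|^{\alpha};q;x)\le\delta_{n}^{\alpha/2}(x)$, and combining the pieces yields the assertion with $B=L$ (a constant depending on $\alpha$ through $L$).

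The argument is essentially routine; the only steps requiring a little attention are the H\"older estimate --- which needs that $\mathcal{L}^{(\alpha,\beta)}_{n}$ is genuinely a positive linear operator, a fact already observed for~(\ref{eq3}) and inherited by~(\ref{eq4}) under the monotonicity assumption on $f$ --- and the choice of $x_{0}$ in $\overline{E}$ so that $d(x,E)$ is actually attained (or the limiting argument replacing it). No estimate beyond Lemma~\ref{L4} is needed.
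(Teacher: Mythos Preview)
Your proof is correct and follows essentially the same route as the paper: choose a nearest point of $\overline{E}$, split via the triangle inequality and the Lipschitz condition, use the subadditivity $(a+b)^{\alpha}\le a^{\alpha}+b^{\alpha}$ to pass from $|t-x_{0}|^{\alpha}$ to $|t-x|^{\alpha}+(d(x,E))^{\alpha}$, apply the positive operator, and finish with H\"older's inequality for the exponents $2/\alpha$ and $2/(2-\alpha)$ together with Lemma~\ref{L4}. The only cosmetic difference is that you carry out the subadditivity step pointwise before applying $\mathcal{L}^{(\alpha,\beta)}_{n}$, while the paper applies the operator first and then splits; the estimate and the constant are the same.
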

\begin{proof}
From the properties of the infimum, there is at least one point $t_o$  in the closure of E, that is
$t_0\in\bar{E},$ such that $$d(x,E)=|t_0-x|.$$
By the triangle inequality we have
\begin{eqnarray*}
\bigg|f(t)-f(x)\bigg|&\leq&\bigg|f(t)-f(t_0)\bigg|+\bigg|f(t_0)-f(x)\bigg|.
\end{eqnarray*}
And \begin{eqnarray*}
\bigg|\mathcal{L}^{(\alpha,\beta)}_{n}(f,q;x)-f(x)\bigg| &\leq &\mathcal{L}^{(\alpha,\beta)}_{n}(|f(t)-f(x)|,x)\\
&\leq&\mathcal{L}^{(\alpha,\beta)}_{n}\left(\bigg|f(t)-f(t_0)\bigg|,x\right)+\mathcal{L}^{(\alpha,\beta)}_{n}\left(\bigg|f(t_0)-f(x)\bigg|,x\right)\\
&\leq &B \left[\mathcal{L}^{(\alpha,\beta)}_{n}\left(|t-t_0|^\alpha,x\right)+|t_0-x|^\alpha\right]\\
&\leq &B \left[\mathcal{L}^{(\alpha,\beta)}_{n}\left(|t-t_0|^\alpha,x\right)+2|t_0-x|^\alpha\right]\\
\end{eqnarray*}
holds. Here we choose $p_1=\frac{2}{\alpha}$ and $p_2=\frac{2}{2-\alpha}$, we get $\frac{1}{p_1}+\frac{1}{p_2}=1$. Then from well-known H\"{o}lder's inequality, we have
\begin{eqnarray*}
\bigg|\mathcal{L}^{(\alpha,\beta)}_{n}(f,q;x)-f(x)\bigg|&\leq & B \left\{\left[\mathcal{L}^{(\alpha,\beta)}_{n}\left(|t-x|^{\alpha p_1},x\right)\right]^{(1/p_1)}
\left[\mathcal{L}^{(\alpha,\beta)}_{n}\left(1^{p_2},x\right)\right]^{(1/p_2)}+2|t_0-x|^\alpha\right\}\\
&=&B\left\{\left[\mathcal{L}^{(\alpha,\beta)}_{n}\left(|t-x|^2,x\right)\right]^{(\alpha/2)}+2|t_0-x|^\alpha\right\}\\
&=&B\{\delta_n^{\alpha/2}(x)+2(d(x,E))^\alpha\}.
\end{eqnarray*}
This completes the proof.
\end{proof}

\section{Global approximation}
For $f\in C[0,1+a],$ the Ditzian--Totik moduli of smoothness of the first and second order are given by
\begin{equation}\label{g1}
\bar{\omega}_{\psi}(f,{\delta})=\sup_{0<h \leq \sqrt{\delta}}\sup_{x+h\psi(x)\in[0,1+a]}\mid f(x+h \psi(x))-f(x)\mid 
\end{equation}
and

\begin{equation}\label{g2}
\omega_2^{\phi}(f,\sqrt{\delta})=\sup_{0<h \leq \sqrt{\delta}}\sup_{x\pm h \phi(x)\in[0,1+a]}\mid f(x+h\phi(x))- 2 f(x)+f(x-h\phi(x)|\mid 
\end{equation}
respectively and the corresponding $K$-functional is defined as

\begin{equation}\label{g3}
\bar{ K}_{2,\phi}(f,\delta) =\inf\{||f-g||+\delta || \psi^2 g''||+\delta^2||g''||:g\in W^2(\phi)\},
\end{equation}
where $\delta >0$ and
$W^2(\phi)=\{g\in C[0,1+a] : g'\in AC[0,1+a], \phi^2 g'' \in C[0,1+a]\}$  and  $g' \in AC_{loc}[0,1+a]$ means that $g$ is differential and $g'$ is absolutely continuous
on every closed interval $[0,1 + a]$. It is well known (\cite{DT} p.24, Theorem 1.3.1) that
\begin{equation}\label{1}
\bar{K}_{2,\phi}(f,\delta) \leq C \omega^{\phi}_2(f,\sqrt{\delta}),
\end{equation}
where $\psi$ is being admissible step-weight function on $[0,1+a]$.

\begin{theorem}
Let $f\in C[0,1+a]$ with $q\in(0,1)$. Then for every $x\in[0,1]$ we have
\begin{eqnarray*}\label{6}
\bigg|\mathcal{L}^{(\alpha,\beta)}_{n}(f,q;x)-f(x)\bigg| &\leq &  C \omega_2^{\phi}\bigg(f, \frac{[n+1]_q^{1/2}}{[n]_q}\bigg)+\bar{\omega}_{\psi}\bigg(f,\frac{1}{(n+\beta)}\bigg).
\end{eqnarray*}
\end{theorem}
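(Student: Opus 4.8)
The plan is to run the Ditzian--Totik analogue of the argument behind Theorem~\ref{the1}. As in that proof I would introduce the interpolating auxiliary operator
\[
\widehat{\mathcal{L}}^{(\alpha,\beta)}_{n}(f,x):=\mathcal{L}^{(\alpha,\beta)}_{n}(f,q;x)+f(x)-f(\eta(x,q)),
\]
with $\eta(x,q)$ as in (\ref{eqn1}), so that $\widehat{\mathcal{L}}^{(\alpha,\beta)}_{n}$ reproduces constants, satisfies $\widehat{\mathcal{L}}^{(\alpha,\beta)}_{n}((t-x);x)=0$, and obeys $\|\widehat{\mathcal{L}}^{(\alpha,\beta)}_{n}h\|\le 3\|h\|$ in view of the norm estimate $\|\mathcal{L}^{(\alpha,\beta)}_{n}h\|\le\|h\|$. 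For an arbitrary $g\in W^{2}(\phi)$ the triangle inequality yields
\[
|\mathcal{L}^{(\alpha,\beta)}_{n}(f,q;x)-f(x)|\le|\widehat{\mathcal{L}}^{(\alpha,\beta)}_{n}(f-g,x)-(f-g)(x)|+|\widehat{\mathcal{L}}^{(\alpha,\beta)}_{n}(g,x)-g(x)|+|f(x)-f(\eta(x,q))|,
\]
where the first summand is at most $4\|f-g\|$.

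For the middle summand I would expand $g$ by Taylor's formula with integral remainder as in (\ref{ac1}) and apply $\widehat{\mathcal{L}}^{(\alpha,\beta)}_{n}$; the linear term is annihilated, so that
\[
\widehat{\mathcal{L}}^{(\alpha,\beta)}_{n}(g,x)-g(x)=\mathcal{L}^{(\alpha,\beta)}_{n}\Big(\int_{x}^{t}(t-u)g''(u)\,du;\,x\Big)+\int_{x}^{\eta(x,q)}\big(\eta(x,q)-u\big)g''(u)\,du.
\]
The Ditzian--Totik step is to write $g''(u)=\phi^{2}(u)g''(u)/\phi^{2}(u)$ and to combine the elementary estimate $\big|\int_{x}^{t}|t-u|\,\phi^{-2}(u)\,du\big|\le c_{a}\,(t-x)^{2}\phi^{-2}(x)$, valid on $[0,1+a]$ for $\phi^{2}(u)=u(1+u)$, with the trivial bound $\big|\int_{x}^{t}(t-u)g''(u)\,du\big|\le\tfrac12\|g''\|(t-x)^{2}$ near the point where the weighted bound degenerates. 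This produces an inequality of the form
\[
|\widehat{\mathcal{L}}^{(\alpha,\beta)}_{n}(g,x)-g(x)|\le C\Big(\phi^{-2}(x)\,\mathcal{L}^{(\alpha,\beta)}_{n}((t-x)^{2};x)\,\|\phi^{2}g''\|+\big[\mathcal{L}^{(\alpha,\beta)}_{n}((t-x)^{2};x)+(\eta(x,q)-x)^{2}\big]\|g''\|\Big).
\]
Then Lemma~\ref{L4}, i.e.\ $\mathcal{L}^{(\alpha,\beta)}_{n}((t-x)^{2};x)\le\frac{[n+1]_{q}}{([n]_{q}+\beta)^{2}}\big(\phi^{2}(x)+\frac{q}{[3]_{q}[n+1]_{q}}\big)$, together with $x\in[0,1]$ (to control $\phi^{-2}(x)$ against the residual term), makes both brackets $\le C\,\delta_{n}^{*}$ with $\delta_{n}^{*}:=[n+1]_{q}[n]_{q}^{-2}$, giving $|\widehat{\mathcal{L}}^{(\alpha,\beta)}_{n}(g,x)-g(x)|\le C\big(\delta_{n}^{*}\|\phi^{2}g''\|+(\delta_{n}^{*})^{2}\|g''\|\big)$.

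Collecting the three summands and taking the infimum over $g\in W^{2}(\phi)$ converts the right-hand side into $C\,\bar K_{2,\phi}(f,\delta_{n}^{*})+|f(x)-f(\eta(x,q))|$, and the standard equivalence $\bar K_{2,\phi}(f,\delta)\le C\,\omega_{2}^{\phi}(f,\sqrt{\delta})$ (cf.\ \cite{DT}) bounds this by $C\,\omega_{2}^{\phi}\big(f,\sqrt{\delta_{n}^{*}}\big)+|f(x)-f(\eta(x,q))|=C\,\omega_{2}^{\phi}\big(f,\frac{[n+1]_{q}^{1/2}}{[n]_{q}}\big)+|f(x)-f(\eta(x,q))|$. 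Finally, using Lemma~\ref{L2}(ii) to write $\eta(x,q)-x=\frac{-\beta x}{[n]_{q}+\beta}+\frac{q[n]_{q}+\alpha[2]_{q}[n+1]_{q}}{[2]_{q}([n]_{q}+\beta)[n+1]_{q}}$, one represents this increment (for $x\in[0,1]$ and $0\le\alpha\le\beta$) as an admissible step $h\,\psi(x)$ with $|h|\le(n+\beta)^{-1/2}$ and concludes $|f(x)-f(\eta(x,q))|\le\bar\omega_{\psi}\big(f,\frac1{n+\beta}\big)$ from the definition and monotonicity of $\bar\omega_{\psi}$; adding the two estimates finishes the proof.

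\smallskip
\noindent\textbf{Main obstacle.} The crux is the weighted remainder estimate near $x=0$: one must control $\phi^{-2}(x)\,\mathcal{L}^{(\alpha,\beta)}_{n}((t-x)^{2};x)$ uniformly on $[0,1]$, which works only because the second central moment of Lemma~\ref{L4} carries the explicit factor $\phi^{2}(x)=x(1+x)$, the residual $O([n+1]_{q}^{-1})$ piece being diverted into the $\|g''\|$-term via the trivial bound. Keeping track of which constant ends up multiplying $\|\phi^{2}g''\|$ and which multiplies $\|g''\|$, so that both reduce to the single scale $\delta_{n}^{*}=[n+1]_{q}[n]_{q}^{-2}$ inside $\bar K_{2,\phi}$, is the book-keeping heart of the argument; identifying $\eta(x,q)-x$ with an admissible $\psi$-step of the asserted order is comparatively routine once $0\le\alpha\le\beta$ is invoked.
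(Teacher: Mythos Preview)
Your proposal is correct and follows the same overall strategy as the paper: introduce the auxiliary operator $\widetilde{\mathcal{L}}^{(\alpha,\beta)}_{n}$ of (\ref{eqn1}), apply Taylor with integral remainder, control the quadratic remainder via Lemma~\ref{L4}, take the infimum over $g\in W^{2}(\phi)$ to land in $\bar K_{2,\phi}$, and then bound $|f(x)-f(\eta(x,q))|$ by $\bar\omega_{\psi}$. The only technical difference is in how the weighted remainder is organised. The paper works throughout with the regularised weight $\delta_n^{2}(v)=\phi^{2}(v)+\tfrac{q}{[3]_q[n+1]_q}$, uses its (asserted) concavity on $[0,1]$ to obtain $|t-v|/\delta_n^{2}(v)\le|t-v|/\delta_n^{2}(x)$ in one stroke, and only afterwards splits $\|\delta_n^{2}g''\|\le\|\phi^{2}g''\|+\tfrac{q}{[n+1]_q}\|g''\|$; you instead work with $\phi^{2}$ directly, invoke the integral bound $\big|\int_x^t|t-u|\phi^{-2}(u)\,du\big|\le c_a(t-x)^{2}\phi^{-2}(x)$, and divert the endpoint degeneracy into the $\|g''\|$ term from the outset. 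The paper's route sidesteps any case distinction near $x=0$ (since $\delta_n^{2}$ is bounded away from zero), while yours makes more explicit where each of the two norms $\|\phi^{2}g''\|$ and $\|g''\|$ actually enters; both reach the same scale $\delta_n^{*}=[n+1]_q/[n]_q^{2}$ and the same final estimate.
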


\begin{proof} 
Defining the operators $\mathcal{ \widetilde{L}}^{(\alpha,\beta)}$ as in ( \ref{eqn1}) for the function $g \in W^2(\psi)$, we have

\begin{eqnarray}\label{eqn2}
\bigg|\mathcal{\widetilde{L}}^{(\alpha,\beta)}_{n}(g,q;x)-g(x)\bigg| &\leq & \mathcal{L}_{n}^{(\alpha, \beta)}\bigg(\bigg|\int_{x}^{t}(t-v) g''(v)dv\bigg|,q,x\bigg)+     \bigg| \int_{x}^{\frac{[n]_q x}{[n]_{q}+\beta} +\frac{q[n]_q +\alpha[2]_q[n+1]_q}{[2]_q([n]_q+\beta)[n+ 1]_q}}\bigg(\frac{[n]_q x}{[n]_{q}+\beta} \nonumber\\
&&+\frac{q[n]_q+\alpha[2]_q[n+1]_q}{[2]_q([n]_q + \beta)[n+ 1]_q}-v)\bigg)g''(v)dv \bigg|.
\end{eqnarray}
Since the function $\delta_n^2(x)$ is concave on [0,1], for $v=t+\tau (x-t),\tau\in[0,1]$ we obtain
 
$$\frac{|t-v|}{\delta^2{_n}(v)} =\frac{\tau|x-t|}{\delta^2{_n}(t+\tau(x-t))}\leq \frac{\tau|x-t|}{\delta{_n}^2(t)+\tau(\delta{_n}^2(x)-\delta_{n}^2(t))}\leq  \frac{|t-v|}{\delta_{n}^2(x)}.$$
Now using (\ref{eqn2})
 
\begin{eqnarray*}
\bigg|\mathcal{\widetilde{L}}^{(\alpha,\beta)}_{n}(g,q;x)-g(x)\bigg| &\leq & \mathcal{L}_{n}^{(\alpha, \beta)}\bigg(\bigg|\int_{x}^{t}\frac{(t-v)}{\delta_{n}^2(v)}dv\bigg|,q,x\bigg) ||\delta_{n}^2g''|| \\&&+   \bigg| \int_{x}^{\frac{[n]_q x}{[n]_{q}+\beta} + \frac{q[n]_q +\alpha[2]_q[n+1]_q}{[2]_q([n]_q+\beta)[n+ 1]_q}}
\frac{\bigg|\frac{[n]_q x}{[n]_{q}+\beta} +\frac{q[n]_q +\alpha[2]_q[n+1]_q}{[2]_q([n]_q+\beta)[n+ 1]_q}-v\bigg|}{\delta_{n}^2(v)}dv\bigg| ||\delta_{n}^2 g''||\\
&\leq &\frac{1}{\delta_{n}^2(x)}\mathcal{L}_{n}^{(\alpha, \beta)}((t-x)^2;q,x)||\delta^2_{n}g''||+ \frac{1}{\delta_{n}^2(x)} \bigg(\frac{[n]_q x}{[n]_{q}+\beta} +\frac{q[n]_q +\alpha[2]_q[n+1]_q}{[2]_q([n]_q+\beta)[n+ 1]_q}-x\bigg)^2 ||\delta_{n}^2 g''||.
\end{eqnarray*}
from Lemma \ref{L4} and $||\delta_{n}^2 g''(x)|| \leq |\phi^2 g''|+\frac{q}{[n+1]_q}  ||g''(x)||,$ where $x\in[0,1]$, we get

\begin{eqnarray}\label{eqn3}
\bigg|\mathcal{\widetilde{L}}^{(\alpha,\beta)}_{n}(g,q;x)-g(x)\bigg| &\leq & \frac{[n+1]_q}{(n+\beta)^2} \bigg(|\phi^2 g''|+\frac{q}{(n+\beta)^2}  ||g''||\bigg)
\end{eqnarray}
Using (\ref{eqn3}), we have for $f\in C[0,1+a]$

\begin{eqnarray*}
\big|\mathcal{L}^{(\alpha,\beta)}_{n}(f,q;x)-f(x)\big| &\leq & \big|\mathcal{\widetilde{L}}_{n}^{(\alpha, \beta)}(f-g,q,x)|+ |\mathcal{L}_{n}^{(\alpha, \beta)}(g,q,x)-g(x)\big| +\big| g(x)-f(x)|\\&&
 + \bigg|f\bigg(\frac{[n]_q x}{[n]_{q}+\beta} +\frac{q[n]_q +\alpha[2]_q[n+1]_q}{[2]_q([n]_q+\beta)[n+ 1]_q}\bigg)- f(x) \bigg| \\
 &\leq & 4\|f-g\|+ \frac{[n+1]_q}{([n]_q+\beta)^2} ||\phi^2 g''|| +  \frac{q [n+1]_q }{([n]_q+\beta)^4}  ||g''||  +\bigg|f\bigg(\frac{[n]_q x}{[n]_{q}+\beta} +\frac{q[n]_q +\alpha[2]_q[n+1]_q}{[2]_q([n]_q+\beta)[n+ 1]_q}\bigg)- f(x) \bigg| \\
 & \leq & 4\|f-g\|+ \frac{[n+1]_q}{[n]_q^2} ||\phi^2 g''|| +  \frac{[n+1]_q^2}{[n]_q^4}  ||g''|| + \bigg| f\bigg(\frac{[n]_q x}{[n]_{q}+\beta} +\frac{q[n]_q +\alpha[2]_q[n+1]_q}{[2]_q([n]_q+\beta)[n+ 1]_q}\bigg)- f(x) \bigg|\\
&\leq & C\big(\|f-g\|+\delta ||\phi^2 g''||+ \delta^2 || g''||\big) + \bigg|f\bigg(\frac{[n]_q x}{[n]_{q}+\beta} +\frac{q[n]_q +\alpha[2]_q[n+1]_q}{[2]_q([n]_q+\beta)[n+ 1]_q}\bigg)- f(x) \bigg|\\
\end{eqnarray*}
where $\delta= \frac{[n+1]_q}{[n]_q^2}$. Taking the infimum on the right hand side over all $ g \in W^2 (\phi)$, we get

\begin{eqnarray}\label{p7}
\big|\mathcal{L}^{(\alpha,\beta)}_{n}(f,q ; x)-f(x)\big| & \leq &  C K_{2,\phi}\bigg(f,\frac{[n+1]_q}{[n]_q^2} \bigg)+ + \bigg|f\bigg(\frac{[n]_q x}{[n]_{q}+\beta} +\frac{q[n]_q +\alpha[2]_q [n+1]_q}{[2]_q([n]_q+\beta)[n+ 1]_q}\bigg)- f(x) \bigg|.
\end{eqnarray}
Now,
 
\begin{eqnarray*}
\bigg|f\bigg(\frac{q[n]_q +\alpha[2]_q[n+1]_q}{[2]_q([n]_q+\beta)[n+ 1]_q}\bigg)- f(x) \bigg|&=& \bigg|f\bigg(x+\psi(x). \frac{- \beta x [2]_q[n+1]_q + q[n]_q+\alpha[2]_q[n+1]_q}{\psi(x)([2]_q([n]_q+\beta)[n+ 1]_q))}\bigg)- f(x) \bigg|\\
 &\leq & \sup_{t,t+\psi(t)\bigg(\frac{- \beta x [2]_q[n+1]_q + q[n]_q+\alpha[2]_q[n+1]_q}{\psi(x)([2]_q([n]_q+\beta)[n+ 1]_q))}\bigg)\in[0,1+p]}\bigg|f\bigg(t +\\&& \psi(t)\bigg(\frac{- \beta x [2]_q[n+1]_q + q[n]_q+\alpha[2]_q[n+1]_q}{\psi(x)([2]_q([n]_q+\beta)[n+ 1]_q))}\bigg)-f(t)\bigg|\\
& \leq & \omega_\phi \bigg(f,\frac{- \beta x [2]_q[n+1]_q + q[n]_q+\alpha[2]_q[n+1]_q}{\psi(x)([2]_q([n]_q+\beta)[n+ 1]_q))}\bigg)\\
& \leq & \omega_\phi \bigg(f, \frac{1}{(n+\beta)}\bigg).
\end{eqnarray*}
%Since $\psi(x)=\bigg[...   \bigg], x\in[0,1].$ 
 Hence by (\ref{g1}) and (\ref{p7})
\begin{eqnarray*}
\big|\mathcal{{L}}^{(\alpha,\beta)}_{n}(f,q;x)-f(x)\big| &\leq & C \omega_2^{\phi}\bigg(f, \frac{[n+1]_q^{1/2}}{[n]_q}\bigg)+\bar{\omega}_{\psi}\bigg(f,\frac{1}{(n+\beta)}\bigg).
\end{eqnarray*}
which complete the proof.
\end{proof}

\hspace{-0.6cm}
\textbf{Motivation and applications}\\
In recent years, applications of $q$-calculus in the area of approximation theory and
number theory have been an active area of research. The approximation of functions by linear positive operators is an important research topic in general mathematics and it also provides powerful tools to application areas such as computer-aided geometric design, numerical analysis,
and solutions of differential equations. $q$-Calculus is a generalization of many subjects, such as hypergeometric series, complex analysis and particle physics. Currently it continues being an important subject of study. It has been shown that linear positive operators constructed by $q$-numbers are quite effective as far as the rate of convergence is concerned and we can have some unexpected results, which are not observed for classical case.\\

\hspace{-0.6cm}
\textbf{Conclusion}\\
By using the notion of $q$-integers we introduced Kantorovich-type discrete $q$-Beta operators and investigated some local and global approximation properties of these operators. We obtained the rate of convergence by using the modulus of continuity and also established some direct theorems. These results generalize the approximation results proved for Kantorovich-type discrete $q$-Beta operators which are directly obtained by our results for $q$ = 1.\\
\indent The results of our lemmas and theorems are more general rather than the results of any other
previous proved lemmas and theorems, which will be enrich the literate of applications of
quantum calculus in operator theory and convergence estimates in the theory of approximations
by positive linear operators. The researchers and professionals working or intend to
work in the areas of analysis and its applications will find this research article to be quite
useful. Consequently, the results so established may be found useful in several interesting
situation appearing in the literature on Mathematical Analysis, Applied Mathematics and
Mathematical Physics.

\vspace{0.6cm}
\hspace{-0.6cm}
\textbf{References:}

\end{document}